\documentclass{amsart}

\usepackage{graphicx} % Required for inserting images
\usepackage{macro}
\usepackage{tikz,tkz-euclide}
\usepackage{mathtools}
\usepackage{thmtools}
\usepackage{thm-restate}

\title{The Hochschild Homology of Reedy Categories}
\author{Alex Ballow \\ Montana State University}

\date{April 2024}

\begin{document}

\begin{abstract}
    We calculate the Hochschild homology of generalized Reedy categories, such as the simplex category, the category of finite sets, the category of finite-dimensional categories, and the PROP associated to an operad.
\end{abstract}

\maketitle

\tableofcontents

\section*{Introduction}
In this paper we aim to calculate a version of Hochschild homology of generalized Reedy categories. Hochschild homology traditionally is evaluated on associative algebras over rings. However Hochschild homology can be calculated using just the bar construction and tensor product, both of which have an analog in general monoidal categories. This process, described more explicitly in \cref{Hhomology}, gives a generalization of Hochschild homology which can be calculated for any arbitrary category. In this generality calculating Hochschild homology is not a tractable problem, so we focus on calculating it for generalized Reedy categories. Generalized Reedy categories are filtered categories where every morphism has a factorization. 

 The category of finite sets, $\Fin$, is a central example of generalized Reedy categories. The filtration is given by cardinality and morphism factor around the image. This example is especially motivating for our calculation as Hochschild homology approximates K-theory and the K-theory of $\Fin$ is known to be the sphere spectrum \cite{barratt1972homology}. We explicitly calculate the Hochschild homology for this and a few other generalized Reedy categories in corollaries for ease of reference. The Reedy structure for most of our chosen examples is well known, but we also include a less common example of a Reedy category: the PROP associated to an operad in {\sf Sets}. The specific examples will be calculated using the more general result from the following theorem. 

\begin{restatable*}{theorem}{test}
\label{thm}
    Given a generalized Reedy category, $\cR$, there is an equivalence between spaces: 
    \[
    {\sf HH}(\cR)  \simeq \coprod_{n \ge 0} \coprod_{[r]\in\cR_{= n}} \coprod_{[\phi]\in \Aut_{\cR}(r)_{/^{\sf conj}}} \sB \sZ_{\Aut_{\cR}(r)}(\phi)
    \]
    where $\sZ_{\Aut_{\cR}(r)}(\phi)$ is the centralizer of $\phi$ in $\Aut_{\cR}(r)$ and $[r]\in\cR_{= n}$ denotes a choice of representative $r$ for each isomorphism class of objects in $\cR_{= n}$.
    \label{mainthm}
\end{restatable*}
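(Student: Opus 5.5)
The plan is to identify ${\sf HH}(\cR)$ with the classifying space of the category of endomorphisms, and then to use the Reedy factorization to shrink that category down to its automorphisms. Concretely, I will take the model for Hochschild homology of a category $\cC$ described in \cref{Hhomology}: the cyclic bar construction, i.e.\ the nerve of the twisted-arrow-type category $\mathsf{End}(\cC)$. Its objects are pairs $(c, f\colon c\to c)$. A morphism $(c,f)\to(d,g)$ is a map $h\colon c\to d$ with $hf = gh$. The key structural input is then the decomposition of $\mathsf{End}(\cC)$ into components. For a groupoid $\cG$ this classifying space is the free loop space $L\sB\cG$, and it is classically
\[
\coprod_{[c]}\coprod_{[\phi]\in \Aut(c)_{/^{\sf conj}}} \sB \sZ_{\Aut(c)}(\phi).
\]
The reason is that $\mathsf{End}(\cG)$ is itself a groupoid. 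Its isomorphism classes are the pairs consisting of an object $c$ and a conjugacy class in $\Aut(c)$, and the automorphism group of $(c,\phi)$ is the centralizer. So the theorem reduces to showing that the inclusion $\mathsf{End}(\cR^{\simeq})\hookrightarrow \mathsf{End}(\cR)$ of the maximal subgroupoid induces an equivalence on classifying spaces.

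To prove this reduction, I will use the generalized Reedy factorization. Every endomorphism $f\colon r\to r$ factors, uniquely up to isomorphism of the middle object, as $r\xrightarrow{f^-} s\xrightarrow{f^+} r$ with $f^-\in\cR^-$ and $f^+\in\cR^+$. I then define a "core" functor $\mathsf{End}(\cR)\to\mathsf{End}(\cR^{\simeq})$. The natural candidate sends $(r,f)$ to the eventual image of $f$. Because degrees are bounded below and $f^n$ factors through objects of nonincreasing degree, the sequence of middle objects of $f^n$ must stabilize. On the stable object $s$ the composite $f^-f^+\colon s\to s$ has a factorization through an object of the same degree. The Reedy axiom (a map whose factorization does not lower degree is an isomorphism) then forces it to be an automorphism $\phi$ of $s$.

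There are two maps to construct. The first is a morphism $(r,f)\to(s,\phi)$ given by a suitable power of $f^-$. The second is a morphism $(s,\phi)\to(r,f)$ given by $f^+$ composed with appropriate iterates. Both are natural in $(r,f)$. I then aim to show that these give natural transformations between the identity of $\mathsf{End}(\cR)$ and the composite of the core functor with the inclusion, using the rightness of direction. Either they form an adjunction, or a zigzag of natural transformations suffices. Natural transformations induce homotopies on nerves, so this exhibits $\sB\mathsf{End}(\cR^{\simeq})$ as a deformation retract. Finally I split the groupoid by degree $n$ and by isomorphism class $[r]\in\cR_{=n}$ to obtain the stated formula.

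I expect the main obstacle to be making the core functor well defined and functorial. The factorization is only unique up to unique isomorphism in the generalized setting, so I must check two things. First, that the eventual-image object is canonical up to a \emph{unique} isomorphism compatible with $\phi$. Second, that a commuting square $hf=gh$ induces a map of cores which is invertible. Here I will use the generalized Reedy axiom that automorphisms act freely/compatibly with factorizations. If functoriality on the nose fails, my fallback is Quillen's Theorem A applied to the inclusion. I would show each comma category $(r,f)\downarrow \mathsf{End}(\cR^{\simeq})$ has an initial object, namely the core map, which again rests on the uniqueness of factorizations up to unique isomorphism.
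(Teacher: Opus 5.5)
\textbf{The model is wrong, and the reduction built on it is false.} The cyclic bar construction $[p]\mapsto\coprod\cC(c_0,c_1)\times\cdots\times\cC(c_p,c_0)$, whose realization is the paper's $\mathsf{HH}(\cC)$, is not the nerve of the endomorphism category $\mathsf{End}(\cC)=\mathrm{Fun}(\sB\NN,\cC)$. In general the two realizations differ.

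If $t$ is terminal in $\cC$, then $(t,\mathrm{id}_t)$ is terminal in $\mathsf{End}(\cC)$, because the unique $h\colon c\to t$ satisfies $hf=h$. So $|\mathsf{End}(\cC)|\simeq *$ for $\cC=\bDelta$, $\Fin$, or the poset $[1]$. On the other hand, the cyclic nerve of a poset is the constant simplicial set on its objects, so $\mathsf{HH}([1])$ is two points. The theorem itself gives $\mathsf{HH}(\bDelta)\simeq\ZZ_{\ge 0}$. Hence $|\mathsf{End}(\cR^{\simeq})|\to|\mathsf{End}(\cR)|$ is not an equivalence for these Reedy categories.

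The failure sits exactly at the step you flagged as the main obstacle. Any coface $[0]\to[1]$ is a morphism $([0],\mathrm{id})\to([1],\mathrm{id})$ in $\mathsf{End}(\bDelta)$. Both objects are already their own eventual images, so no core functor restricting to the identity on $\mathsf{End}(\cR^{\simeq})$ can exist. Your Theorem~A fallback fails for the same reason: $([1],\mathrm{id})\downarrow\mathsf{End}(\bDelta^{\simeq})$ is the discrete set $\coprod_m\hom_{\bDelta}([1],[m])$, which has no initial object. Separately, the axiom that automorphisms act freely is not available here: it is exactly the Berger--Moerdijk condition the paper drops.

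\textbf{What you need instead.} The missing ingredient is a model of $\mathsf{HH}$ in which the only non-invertible moves are those generated by the circle; an arbitrary intertwiner $h$ with $hf=gh$ must not count. In the paper's $\mathsf{Un}_{\le n}$ over $\para$, a morphism is a paracyclic map together with a natural \emph{isomorphism}. So a cyclic diagram is linked only to its subdivisions, its composites, and isomorphic copies.

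A 1-categorical alternative is Segal's edgewise subdivision of the cyclic nerve, which has the same realization. It is the nerve of the category whose objects are pairs $a\colon x\to y$, $b\colon y\to x$. A morphism $(a,b)\to(a',b')$ there is a pair $(u\colon x'\to x,\ w\colon y\to y')$ with $a'=wau$ and $b=ub'w$.

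Your rotation idea does survive: passing from $(r,f^+f^-)$ to $(\mathsf{im}(f),f^-f^+)$ and iterating until an automorphism appears. In the paper this is realized by the zigzag of natural transformations $\mathrm{id}\Rightarrow\mathsf{sd}\Leftarrow F$, which gives $|F|\simeq\mathrm{id}$ with no functorial core needed. For loops with $N$ vertices, the paper then runs an inner induction on $N$ inside the outer induction on the degree $n$. It uses Lemma~\ref{factor} to propagate a degree drop at one vertex to its neighbours under $F$. This shows that $F^N$ lands in $\mathsf{Un}_{=n}\amalg\mathsf{Un}_{\le n-1}$. Only the groupoid piece $\mathsf{Un}_{=n}$ is then computed as a free loop space.
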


As an immediate corollary, we calculate the Hochschild homology for the common Reedy categories listed in \cref{Reedycats}.
\begin{corollary}
An application of \cref{mainthm}
\begin{eqnarray*} 
    &\hh(\Fin) \simeq \coprod\limits_{n\geq 0} \coprod\limits_{\substack{ n \hspace{.5ex} \text{cycle} \\ \sf \text{type} \hspace{.5ex} \omega}} \sB \sZ_{\Sigma_n}(\omega)~;  \\
    &\hh(\finvect) \simeq \coprod\limits_{n\geq 0}  \coprod\limits_{[M]\in GL_{n{/^{\sf conj}}}}\sB \sZ_{GL_n}(M)~; \\
    &\hh(\bdelta) \simeq \ZZ_{\ge 0}~;  \\
    &\hh(\para) \simeq \ZZ_{\ge 0} \times \ZZ~; \\
    &\hh(\Lambda) \simeq \coprod\limits_{n\geq 0} \ZZ_{/n\ZZ}~; \\
    &\hh(PROP({\sf Assoc})) \simeq \ZZ_{\ge 0}. 
\end{eqnarray*}
where {\sf Assoc} is the associative operad.
\end{corollary}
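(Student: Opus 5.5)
The corollary follows from \cref{mainthm} once we identify, for each listed category, its generalized Reedy structure, the isomorphism classes of objects in each filtration degree, and the conjugacy classes and centralizers in the automorphism groups. I would take the Reedy structures as given in \cref{Reedycats} and handle the six cases in turn. For each I substitute into the formula and simplify, using that $\sB G$ of a trivial group is a point and $\sB\ZZ \simeq S^1$. Note that for infinite discrete groups such as $\ZZ$, $\sB\ZZ$ is not discrete. So the displayed answers $\ZZ_{\ge 0}\times\ZZ$ and $\coprod_n \ZZ_{/n\ZZ}$ must be read as the sets of path components, with each component a $\sB$ of the relevant centralizer, unless the centralizers turn out trivial.

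\textbf{The groupoid-heavy cases.} For $\Fin$ the degree is cardinality, so there is one isomorphism class $[n]$ per $n\ge 0$, with $\Aut(n)=\Sigma_n$. Conjugacy classes in $\Sigma_n$ are cycle types $\omega$, which gives the first formula directly. For $\finvect$ (over a fixed field) the degree is dimension, there is one class per $n$, and $\Aut = GL_n$, which gives the second formula verbatim. For $\bdelta$ each $[n]$ is the unique object of degree $n$ and $\Aut([n])$ is trivial, since order-preserving bijections are identities. Hence each summand is $\sB(1)=\ast$ and $\hh(\bdelta)\simeq \coprod_{n\ge 0}\ast = \ZZ_{\ge 0}$.

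\textbf{Cyclic-type cases.} For Connes' $\Lambda$, $\Aut([n])\cong \ZZ/(n+1)$, which is abelian, so each element is its own conjugacy class with centralizer the whole group. Each summand is then $\coprod_{\ZZ/(n+1)} \sB\ZZ/(n+1)$. Re-indexing $n+1\mapsto n$ recovers $\coprod_n \ZZ_{/n\ZZ}$ on components. I would check that this matches the paper's convention, i.e.\ that either the $\sB$ factors are suppressed or the intended statement is on $\pi_0$. For the paracyclic category $\para$, $\Aut([n])\cong\ZZ$. It is abelian, so the conjugacy classes are $\ZZ$ and the centralizers are $\ZZ$. This gives $\coprod_{n\ge0}\coprod_{\ZZ}\sB\ZZ$, whose set of components is $\ZZ_{\ge0}\times\ZZ$.

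\textbf{The PROP case.} I expect $PROP({\sf Assoc})$ to be the main obstacle. Objects are finite sets $n$ with degree $n$, and morphisms $n\to m$ are maps of finite sets with a linear order on each fiber. An automorphism is a bijection, which has singleton fibers. So $\Aut(n)$ looks like $\Sigma_n$, and a naive substitution would reproduce the $\Fin$ answer rather than $\ZZ_{\ge0}$. The key step is therefore to fix the Reedy structure from \cref{Reedycats} on this PROP, which presumably uses the planar/ordered model (the category of noncommutative sets, or its skeleton with ordered objects), where automorphisms must preserve the relevant orderings and are trivial. Once that is established, the computation is the same as for $\bdelta$: one contractible component per $n$, giving $\ZZ_{\ge 0}$. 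I would first verify the precise definition of the PROP and its degree function in the earlier section before asserting trivial automorphism groups.
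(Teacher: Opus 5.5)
Your approach is the paper's own: the paper gives no argument beyond ``an application of the main theorem'', i.e.\ reading off isomorphism classes, automorphism groups, conjugacy classes and centralizers, and your $\Fin$, $\finvect$ and $\bdelta$ cases do exactly that. Your caveat about $\para$ and $\Lambda$ is correct, and you should state it as a correction to the statement rather than as a question of convention. For $\para$ the theorem gives $\coprod_{\ell\ge1}\coprod_{k\in\ZZ}\sB\ZZ$, a disjoint union of circles. For $\Lambda$ it gives $\coprod_{m\ge1}\coprod_{\ZZ/m}\sB(\ZZ/m)$. Neither space is discrete, so the displayed formulas hold only on $\pi_0$. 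Moreover, the $\Lambda$ index must start at $1$: the $n=0$ term $\ZZ_{/0\ZZ}=\ZZ$ in the statement is spurious.

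The PROP case is where your proposal is incomplete, and the lever you reach for is the wrong one. The right-hand side of the main theorem is $\coprod_{[r]}\map(S^1,\sB\Aut(r))$ over all isomorphism classes of objects. This does not depend on the Reedy structure at all, so no choice of Reedy structure can make $\Sigma_n$ disappear.

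Your candidate model also fails. In noncommutative sets (the PROP of the symmetric operad with ${\sf Assoc}(k)=\Sigma_k$), every bijection is an automorphism, whether or not you use ordered representatives. That model therefore reproduces $\hh(\Fin)$, not $\ZZ_{\ge0}$.

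The missing step is to identify the paper's $PROP(\cO)$ as the non-symmetric construction its tree pictures depict. There, a morphism $n\to m$ is a splitting of the $n$ inputs into $m$ consecutive blocks $n=n_1+\dots+n_m$, together with a label in $\cO(n_1)\times\dots\times\cO(n_m)$, with no permutation data. For ${\sf Assoc}(k)=\ast$ this is the category of finite ordinals (including the empty one) and order-preserving maps. Its automorphisms are identities, so each $n\ge 0$ contributes a single point, and $\hh(PROP({\sf Assoc}))\simeq\ZZ_{\ge0}$ follows exactly as for $\bdelta$.
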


The layout of the remaining content begins with an overview of necessary background material in Section 1. Specifically, it gives a review of generalized Reedy categories before discussing formulism established in  \cite{ayala2025symmetriescyclicnerve}. The formulism develops quiver representations of $\infty$-categories and leverages the inherently cyclic nature of Hochschild homology. In Section 2 we prove the theorem through a series of inductive arguments. The proof begins with induction on the degree of the Reedy structure. Within that inductive step we induct again, this time over the filtration of $\para$. The second inductive argument constructs a map with disjoint image such that the first inductive hypothesis applies to a piece. The map is homotopic to the identity, so the image is homotopy equivalent to the domain. This proves the inductive step of the first induction, finishing the proof.
\subsection{Acknowledgments}
This material is based upon work supported by the U.S. Department of Energy, Office of Science, Office of Advanced Scientific Computing Research, Department of Energy Computational Science Graduate Fellowship under Award Number DE-SC0022158. Additionally this work would never have been completed without the help of my advisor at Montana State University, David Ayala, who was supported by the National Science Foundation under awards 1812055 and 1945639.

\section{Definitions, Notation, and Preliminary Results}
\subsection{Reedy Categories}
 We begin with some facts about generalized Reedy categories. 
\begin{definition}
     A {\bf generalized Reedy category}, or just {\bf Reedy category}, is a category $\cR$ together with two subcategories $\cR^-$ and $\cR^+$ and a function $\deg:\Obj(\cR) \to \ZZ_{\ge 0}$ called degree satisfying the following:
     \begin{enumerate}
     \label{Reedydef}
         \item given a morphism $f:X \to Y$ in $\cR^+$, then $\deg(X) \le \deg(Y)$ i.e. it raises degree;
         \item given a morphism $f:X \to Y$ in $\cR^-$,  then $\deg(X) \ge \deg(Y)$ i.e. it lowers degree;
         \item given a morphism $f:X \to Y$, then $f$ is an isomorphism if and only if $f\in \cR^+\cap \cR^-$ i.e. isomorphisms preserve degree.\label{3}
         \item every morphism $f $ in $ \cR$ factors uniquely up to unique isomorphism as a morphism $f^- $ in $ \cR^-$ followed by a morphism $f^+ \in \cR^+$.
         \label{five}
     \end{enumerate}
     The full subcategory consisting of those objects with degree less than (greater than) or equal to $n \in \ZZ_{\ge 0}$ will be denoted $\cR_{\le n}$ ($\cR_{\ge n}$). The subcategory generated by objects with degree $n$ and degree preserving morphisms will be denoted $\cR_{= n}$. Finally the intermediary object in a factorization of $f$ will be denoted ${\sf im}(f) $. \cref{five} above then implies that for every $f:X \to Y$ in $ \cR$ we have the following communitive diagram:
     \[\begin{tikzcd}
        	X && Y \\
        	& {\im(f)}
        	\arrow["f", from=1-1, to=1-3]
        	\arrow["{f^-}"', from=1-1, to=2-2]
        	\arrow["{f^+}"', from=2-2, to=1-3]
        \end{tikzcd}~.\]
\end{definition}
\begin{remark}
    There is some variance in what exactly the definition of a Reedy category should be. In deciding which definition to use we prioritize future generalization. Namely, we nearly use the generalized Reedy category definition used in \cite{Berger_2010}(Definition 1.1). We drop their condition (iv), which required certain isomorphisms to be the identity. This justifies our use of $\ZZ_{\ge 0}$ rather than an arbitrary ordinal. We drop this condition to enable \cref{Reedyinf}.   
\end{remark}
\begin{remark}
    The above definition of a Reedy category manifestly affords an extension to $\infty$-categories. We believe the results presented here are valid in that generalization. 
    \label{Reedyinf}
\end{remark}
\begin{observation}
    It follows from \cref{Reedydef} (\ref{3}) that $\cR^+$ and $\cR^-$ each contain all of the objects of $\cR$, i.e. they are wide subcategories.
\end{observation}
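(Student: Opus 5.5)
The statement to prove is the Observation: $\cR^+$ and $\cR^-$ are wide subcategories of $\cR$, i.e.\ each contains every object of $\cR$. The plan is to show that for every object $X$ the identity $\mathrm{id}_X$ lies in both $\cR^+$ and $\cR^-$. Since $\cR^\pm$ are subcategories, containing a morphism with source or target $X$ forces them to contain $X$.

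The key step is an application of \cref{Reedydef} (\ref{3}), read in the direction ``isomorphism $\Rightarrow$ lies in $\cR^+\cap\cR^-$.'' For any object $X\in\Obj(\cR)$, the identity $\mathrm{id}_X\colon X\to X$ is an isomorphism in $\cR$. Condition (\ref{3}) then gives $\mathrm{id}_X\in \cR^+\cap\cR^-$. Since a subcategory containing a morphism also contains its source and target, $X\in\Obj(\cR^+)$ and $X\in\Obj(\cR^-)$. As $X$ was arbitrary, both subcategories are wide.

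There is no real obstacle. The only point to state explicitly is that condition (\ref{3}) is an ``if and only if,'' and we use only the forward implication. As an alternative route, one could factor $\mathrm{id}_X = f^+\circ f^-$ using (\ref{five}); this exhibits morphisms of $\cR^-$ with source $X$ and of $\cR^+$ with target $X$, which again places $X$ in both subcategories. The argument via (\ref{3}) is shorter, so I would present that one and mention the factorization argument only as a remark if at all.
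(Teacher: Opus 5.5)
Your proof is correct and matches the paper's intent: every identity $\mathrm{id}_X$ is an isomorphism, so condition (3) puts it in $\cR^+\cap\cR^-$, and therefore each object of $\cR$ lies in both subcategories. The paper states the observation with no argument beyond citing (3), so your write-up just makes that one-line deduction explicit.
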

    Before we proceed let us introduce a few examples of Reedy categories. We will explicitly calculate the factorization homology of all of the following.
\begin{example}
The following are all Reedy categories:
\label{Reedycats}
    \begin{enumerate}
        \item $\Fin$ - The category of finite sets where the degree is given by cardinality, $\Fin^+$ consists of surjections, $\Fin^-$ consists of injections and the factorization required by \cref{five} is the factorization of set maps through their image;
        \item $\finvect$ - The category of finite vector spaces over a field, with degree given by dimension and the factorization of a morphism $f$ is again given by a surjection into the image of $f$ and an injection into the codomain;
        \item $\bDelta$ - The simplex category where degree is given by $[p]\mapsto p$ and the factorization is again surjections and injections through the image;
        \item $\para$ - The paracyclic category (\cref{paracyclic}) where the degree is given by the cardinality of $[i, 1 \cdot i]$ and the factorization is again through the image;
        \item $\Lambda$ - Connes cyclic category as defined in \cite{connes1983cohomologie} where the degree is given by the cardinality of the cyclically ordered finite set and the factorization is through the image;
        \item The PROP associated to an operad in {\sf Sets}.
        \label{PROP}
    \end{enumerate}
\end{example}
The Reedy structure was named for all the examples except \cref{Reedycats}(\ref{PROP}), which we will discuss more thoroughly. Intuitively, an operad is a method to describe algebraic structure, or $n$-ary operations, in a monoidal category. PROPs generalize this structure to include the possibility of operations with multiple outputs.
% \begin{definition}
%     Given a symmetric monoidal category, ($\cV$, $\otimes$, $I$), an operad, $\cO$, in $\cV$ is objects $\cO(n)$ from $\cV$ indexed by $\NN$ together with
%     \begin{itemize}
%         \item Right actions of symmetric groups $\rho_n:S_n \to \hom(\cO(n),\cO(n))$;
%         \item A unit $e:I \to \cO(1)$ ;
%         \item Composition operations
%         $\cO(k)\otimes \cO({n_1})\otimes \cO(n_2) \otimes ... \otimes \cO({n_k})\to \cO({n_1+…+n_k})$.
%     \end{itemize}
%     The algebraic structure comes when considering the $\cO(n)$ as objects which parameterize $n$-ary operations.
% \end{definition}
% This definition lends itself well to generalization, we focus on allowing for morphisms more general than $n$-ary operations. Specifically we allow for not only for multiple inputs, but also multiple outputs. This concept is called a PROP. 

\begin{observation}
    Given an operad $\cO$ in $\Sets$, $PROP(\cO)$ is a generalized Reedy category with $PROP(\cO)^-$ ($PROP(\cO)^+$) the subcategory whose morphisms are carried to decreasing (increasing) morphisms in $\Fin$. These subcategories do form a factorization system since $\cO$ is unital.
\end{observation}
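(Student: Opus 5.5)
We first fix the model of $PROP(\cO)$ we work with. Its objects are finite sets, and its degree is cardinality. A morphism $X\to Y$ is a pair $(f,(o_y)_{y\in Y})$, where $f\colon X\to Y$ is a map of finite sets and $o_y\in\cO(f^{-1}(y))$. Composition composes the underlying set maps and applies operadic composition fibrewise. Identities are the identity maps labelled by the unit $e\in\cO(1)$.

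There is a forgetful functor $\pi\colon PROP(\cO)\to\Fin$. We set $PROP(\cO)^-:=\pi^{-1}(\text{surjections})$ and $PROP(\cO)^+:=\pi^{-1}(\text{injections})$. These are the morphisms whose underlying maps do not raise, respectively do not lower, cardinality. Both are subcategories, because surjections and injections are each closed under composition and contain identities. Conditions (1) and (2) of \cref{Reedydef} are then immediate from cardinality.

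For condition (3), a morphism lies in $PROP(\cO)^+\cap PROP(\cO)^-$ exactly when $f$ is a bijection. Such a morphism is invertible in $PROP(\cO)$ exactly when each label $o_y\in\cO(1)$ is invertible under operadic composition. So (3) holds precisely when every element of $\cO(1)$ is invertible, i.e. $\cO(1)$ is a group under composition. The main case is $\cO(1)=\{e\}$, as for {\sf Assoc}. I will state this as a standing assumption on $\cO$, since for a general $\cO$ in $\Sets$ condition (3) can fail.

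For condition (4), we factor $(f,(o_y))$ through $\im(f)\subseteq Y$. Set $f^-:=(p,(o_y)_{y\in\im f})$, where $p\colon X\twoheadrightarrow\im f$. Set $f^+:=(i,(b_y))$, where $i\colon\im f\hookrightarrow Y$, with $b_y=e$ for $y\in\im f$ and $b_y=o_y\in\cO(0)$ for $y\notin\im f$. By the unit axiom the composite $f^+\circ f^-$ has label $e\circ o_y=o_y$ over the image. Over points off the image it has the nullary composite $o_y$. So $f^+\circ f^-=f$; this is where unitality of $\cO$ is used.

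For uniqueness, suppose $f=(i',b')\circ(p',a')$ with $p'\colon X\twoheadrightarrow Z$ and $i'\colon Z\hookrightarrow Y$. Set-level factorization gives a unique bijection $\sigma\colon Z\to\im f$ compatible with both factorizations. Over $\im f$ the labels satisfy $b'_{z}\circ a'_z=o_{\sigma z}$ with $b'_z\in\cO(1)$. When $\cO(1)$ is a group this forces $a'_z=(b'_z)^{-1}\circ o_{\sigma z}$. Hence the morphism $(\sigma,(b'_z))\colon Z\to\im f$ is an isomorphism intertwining the two factorizations, and it is the only one, since both $\sigma$ and the labels are forced. Off the image the labels in $\cO(0)$ must agree with $o_y$, because composition there is nullary.

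The main obstacle is exactly this uniqueness step together with condition (3). Both hinge on invertibility of the labels in $\cO(1)$; without it, non-invertible unary operations give non-isomorphic factorizations. So the proof will make this hypothesis on $\cO$ explicit.
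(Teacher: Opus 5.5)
Your proof is correct, and your factorization is the one the paper uses. The paper's tree picture first performs all non-nullary operations along the surjection onto the image, then applies identities and the nullary labels along the injection. That is exactly your $f^-=(p,(o_y)_{y\in\im f})$ and $f^+=(i,(b_y))$, with unitality used only to supply the labels $e$.

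Where you go beyond the paper is in checking condition (3) and the uniqueness clause of (4), and your caveat is right. The paper only remarks that the analysis relies on $\cO(1)$ being discrete. That holds automatically over $\Sets$, but it is not the relevant condition. Invertibility of every element of $\cO(1)$ is genuinely necessary. For instance, take $\cO(1)=(\ZZ_{\ge 0},+)$ and $\cO(n)=\emptyset$ for $n\neq 1$:
\begin{itemize}
\item the endomorphism $(\id_{\{*\}},1)$ lies in $PROP(\cO)^+\cap PROP(\cO)^-$ but is not invertible, so (3) fails;
\item the pairs $\bigl((\id,0),(\id,1)\bigr)$ and $\bigl((\id,1),(\id,0)\bigr)$ both factor $(\id,1)$. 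The only isomorphism available is $(\id,0)$, which would force $0=1$, so (4) fails too.
\end{itemize}
So the statement holds exactly under your added hypothesis that $\cO(1)$ is a group. This covers ${\sf Assoc}$, where $\cO(1)$ is trivial and which is the case the paper actually uses, and your argument proves the statement in that generality.

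One small wording fix: surjections do not raise cardinality, but not every map that fails to raise cardinality is a surjection. Phrase that sentence as a consequence of the definition rather than a characterization; it is only needed for (1) and (2), so nothing else changes.
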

\begin{remark}
    The morphisms in $PROP(\cO)$ can be depicted as trees, with the inputs as the leftmost nodes and the outputs as the rightmost nodes. Composition in this depiction corresponds to concatenation. For example  here is the tree representing a morphism $5 \to 3$, specifically of $(f_1, f_2, f_3) \in \cO(3) \times \cO(0) \times \cO(2)$:
   \[\begin{tikzcd}
	\bullet \\
	\bullet && \bullet \\
	\bullet && {\stackrel{{\color{red}{f_2}}}{\bullet}} \\
	\bullet && \bullet \\
	\bullet
	\arrow[shorten <=-6pt, shorten >=-6pt, color={rgb,255:red,153;green,92;blue,214}, no head, from=1-1, to=2-3]
	\arrow[shorten <=-6pt, shorten >=-6pt, "{f_1}", color={rgb,255:red,153;green,92;blue,214}, no head, from=2-1, to=2-3]
	\arrow[shorten <=-6pt, shorten >=-6pt, color={rgb,255:red,153;green,92;blue,214}, no head, from=3-1, to=2-3]
	\arrow[shorten <=-6pt, shorten >=-6pt, "{f_3}", color={rgb,255:red,92;green,92;blue,214}, no head, from=4-1, to=4-3]
	\arrow[shorten <=-6pt, shorten >=-6pt, color={rgb,255:red,92;green,92;blue,214}, no head, from=5-1, to=4-3]
\end{tikzcd}\]
This makes the Reedy category structure obvious. The factorization of a morphism $f$ is given by first doing any non-nullary operations followed by the identity and any necessary nullary operations. As an example we factor the morphism $(f_1, f_2, f_3) \in \cO(3) \times \cO(0) \times \cO(2)$ from above. 
\[\begin{tikzcd}
	\bullet \\
	\bullet && \bullet && \bullet \\
	\bullet &&&& {\stackrel{{\color{red}{f_2}}}{\bullet}} \\
	\bullet && \bullet && \bullet \\
	\bullet
	\arrow[shorten <=-6pt, shorten >=-6pt, color={rgb,255:red,153;green,92;blue,214}, no head, from=1-1, to=2-3]
	\arrow[shorten <=-6pt, shorten >=-6pt, "{f_1}", color={rgb,255:red,153;green,92;blue,214}, no head, from=2-1, to=2-3]
	\arrow[shorten <=-6pt, shorten >=-6pt, "{\sf id}", no head, from=2-3, to=2-5]
	\arrow[shorten <=-6pt, shorten >=-6pt, color={rgb,255:red,153;green,92;blue,214}, no head, from=3-1, to=2-3]
	\arrow[shorten <=-6pt, shorten >=-6pt, "{f_3}", color={rgb,255:red,92;green,92;blue,214}, no head, from=4-1, to=4-3]
	\arrow[shorten <=-6pt, shorten >=-6pt, "{\sf id}", no head, from=4-3, to=4-5]
	\arrow[shorten <=-6pt, shorten >=-6pt, color={rgb,255:red,92;green,92;blue,214}, no head, from=5-1, to=4-3]
\end{tikzcd}~.\]
Explicitly, this picture is denoting the composition (as in the definition of operad) of $(f_1, f_3) \in \cO(3) \times \cO(2)$ with $(\id, f_2, \id) \in \cO(1) \times \cO(0) \times \cO(1)$. 
Note that this analysis relies on $\cO(1)$ being a discrete category which is true of operads over sets. We believe the results in this paper apply to PROPs of operads over more general categories (i.e. $\spaces$), but we omit any such results here.  
\end{remark}

Now we get back on track with some useful properties of Reedy categories. 
\begin{observation}
    The opposite of a Reedy category, $\cR$ is also a Reedy category. Namely the degree map is unchanged, $(\cR^{\sf op})^+=\cR^-$, and $(\cR^{\sf op})^-=\cR^+$. 
\end{observation}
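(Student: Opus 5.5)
We need to prove that the opposite of a Reedy category is again a Reedy category, with degree unchanged, $(\cR^{\sf op})^+=\cR^-$ and $(\cR^{\sf op})^-=\cR^+$. The plan is to check the four axioms of the definition one at a time for the data $(\cR^{\sf op}, (\cR^{\sf op})^-:=\cR^+, (\cR^{\sf op})^+:=\cR^-, \deg)$. Each check amounts to reversing arrows.

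First, note that $\cR^+$ and $\cR^-$ are subcategories of $\cR$. Their opposites are therefore subcategories of $\cR^{\sf op}$ with the same objects and the same morphisms, only with source and target swapped.

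For axiom (1), take a morphism $f^{\sf op}:Y\to X$ in $(\cR^{\sf op})^+$. It corresponds to a morphism $f:X\to Y$ in $\cR^-$, so $\deg(X)\ge\deg(Y)$. This is exactly the statement that $f^{\sf op}$ raises degree. Axiom (2) is the same argument with the roles of $+$ and $-$ interchanged.

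For axiom (3), a morphism $f^{\sf op}$ is an isomorphism in $\cR^{\sf op}$ if and only if $f$ is an isomorphism in $\cR$. By axiom (3) for $\cR$, this holds if and only if $f\in\cR^+\cap\cR^-$. Since $\cR^+\cap\cR^-$ is symmetric in $+$ and $-$, this is the same as $f^{\sf op}\in(\cR^{\sf op})^+\cap(\cR^{\sf op})^-$.

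The only step needing any care is the factorization axiom (4). Let $g=f^{\sf op}:Y\to X$ be a morphism in $\cR^{\sf op}$. Factor $f$ in $\cR$ as $f=f^+\circ f^-$ with $f^-:X\to\im(f)$ and $f^+:\im(f)\to Y$. Taking opposites gives
\[
g=(f^-)^{\sf op}\circ(f^+)^{\sf op},
\]
where $(f^+)^{\sf op}:Y\to\im(f)$ lies in $\cR^+=(\cR^{\sf op})^-$ and $(f^-)^{\sf op}:\im(f)\to X$ lies in $\cR^-=(\cR^{\sf op})^+$. So $g$ factors as a $(\cR^{\sf op})^-$ morphism followed by a $(\cR^{\sf op})^+$ morphism, and existence is settled.

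For uniqueness up to unique isomorphism, we compare two such factorizations of $g$. By reversing arrows, they correspond bijectively to two $(\cR^-,\cR^+)$ factorizations of $f$. An isomorphism $\theta$ of intermediate objects commuting with the factorizations of $f$ corresponds to the isomorphism $\theta^{\sf op}$ (equivalently $\theta^{-1}$, read in $\cR^{\sf op}$) commuting with the factorizations of $g$. This correspondence is a bijection, so uniqueness and uniqueness of the comparison isomorphism both transfer directly from $\cR$. This completes the check, with no real obstacle beyond keeping track of which direction each arrow points.
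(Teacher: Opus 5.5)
Your proposal is correct and matches the paper: the paper states this as an observation with no written proof beyond specifying the data ($\deg$ unchanged, $(\cR^{\sf op})^{\pm}=\cR^{\mp}$), and your axiom-by-axiom arrow reversal is exactly the routine check it relies on. In particular, you correctly verify that reversing $X\to{\sf im}(f)\to Y$ gives a $(\cR^{\sf op})^-$ morphism followed by a $(\cR^{\sf op})^+$ morphism, so the required order of the factorization is preserved.
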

\begin{lemma}
    Given morphisms $A \xra{f} B\xra{g} C$ in a Reedy category, $\cR$, such that $\deg(B) < n$, then $\deg(\im(f)), \deg(\im(g)) < n$.
    \label{factor}
\end{lemma}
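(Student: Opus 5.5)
The argument only uses the factorization axiom \cref{Reedydef}(\ref{five}) and the degree conditions (1) and (2); the two maps are handled separately.

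For $f : A \to B$, factor it as $A \xrightarrow{f^-} \im(f) \xrightarrow{f^+} B$ with $f^- \in \cR^-$ and $f^+ \in \cR^+$. By condition (1), $f^+$ raises degree, so $\deg(\im(f)) \le \deg(B) < n$.

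For $g : B \to C$, factor it as $B \xrightarrow{g^-} \im(g) \xrightarrow{g^+} C$. By condition (2), $g^-$ lowers degree, so $\deg(\im(g)) \le \deg(B) < n$.

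The choice of factorization does not matter. Factorizations are unique up to unique isomorphism, and by condition (\ref{3}) isomorphisms lie in $\cR^+ \cap \cR^-$. Such a morphism both raises and lowers degree, so it preserves degree. Hence $\deg(\im(f))$ and $\deg(\im(g))$ are well defined.

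The proof is direct and I do not expect an obstacle. The only care needed is to apply the raising condition to the $\cR^+$-part of $f$, which lands in $B$, and the lowering condition to the $\cR^-$-part of $g$, which starts at $B$. The composite $g \circ f$ plays no role.
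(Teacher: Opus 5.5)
Your proof is correct and is essentially the paper's argument: factor both maps, then use that $f^+\in\cR^+$ raises degree into $B$ and that $g^-\in\cR^-$ lowers degree out of $B$. Your extra remark, that isomorphisms preserve degree so $\deg(\im(-))$ does not depend on the chosen factorization, is a small well-definedness point the paper leaves implicit.
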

\begin{proof}
    We begin by factoring both $f$ and $g$:
    \[
        A \xra{f^-} \im(f) \xra{f^+} B\xra{g^-} \im(g) \xra{g^+} C ~.
    \]
    Now as $\deg(B) < n$ and $ \im(f) \xra{f^+} B$ is an increasing morphism then $\im(f)$ must have had degree less than $n$. Similarly, as $B \xra{g^-} \im(g)$ is decreasing, $\deg(\im(g)) < n$.
\end{proof}

\begin{lemma}
    The factorization in \cref{Reedydef} (\ref{five}) is functorial. In other words, given factorizations of $f:X \to X'$ and  $g:Y \to Y'$ along with functors $\eta:X \to Y$ and $\eta':X' \to Y'$ such that their square commutes then the dotted arrow exists and makes the following diagram commute
       \[\begin{tikzcd}
    	X && {X'} \\
    	& {{\sf im} (f)} \\
    	Y && {Y'} \\
    	& {{\sf im} (g)}
    	\arrow["f", from=1-1, to=1-3]
    	\arrow["f^-" {below}, from=1-1, to=2-2]
    	\arrow["\eta", from=1-1, to=3-1]
    	\arrow["{\eta'}", from=1-3, to=3-3]
    	\arrow["f^+" {below}, from=2-2, to=1-3]
    	\arrow[dashed, from=2-2, to=4-2]
    	\arrow["{g}" {description, near end}, from=3-1, to=3-3]
    	\arrow["g^-" {below}, from=3-1, to=4-2]
    	\arrow["g^+" {below}, from=4-2, to=3-3]
        \end{tikzcd}\hspace{5ex}.\]
    \label{imcommute}
    \end{lemma}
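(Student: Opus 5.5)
The plan is to build the dashed arrow by refactoring the composite $\im(f) \to Y'$ and then using uniqueness of factorizations. (Here $\eta,\eta'$ are morphisms in $\cR$, not functors.)

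First, consider the two factorizations of the common composite
\[
\eta'\circ f \;=\; g\circ \eta \colon X \to Y'.
\]
Factor the morphism $\eta\colon X\to Y$ composed with $g^-$, that is $g^-\circ\eta\colon X\to \im(g)$, as $X\xra{a^-} W\xra{a^+}\im(g)$. Then $g\circ\eta=(g^+\circ a^+)\circ a^-$, and $g^+\circ a^+\in\cR^+$ since $\cR^+$ is a subcategory. Similarly, factor $\eta'\circ f^+\colon \im(f)\to Y'$ as $\im(f)\xra{b^-}V\xra{b^+}Y'$. This gives $\eta'\circ f = b^+\circ(b^-\circ f^-)$ with $b^-\circ f^-\in\cR^-$.

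Because $g\eta=\eta' f$, uniqueness of factorizations (\cref{Reedydef} (\ref{five})) supplies a unique isomorphism $\theta\colon V\xra{\cong} W$ with
\[
\theta\circ b^-\circ f^- = a^- \quad\text{and}\quad g^+\circ a^+\circ\theta=b^+.
\]
Define the dashed arrow as
\[
d := a^+\circ\theta\circ b^-\colon \im(f)\to\im(g).
\]
The two triangles then commute as follows:
\[
d\circ f^- = a^+\circ a^- = g^-\circ\eta, \qquad g^+\circ d = b^+\circ b^- = \eta'\circ f^+.
\]
So both small squares of the diagram commute, which is the claim.

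For uniqueness, suppose $d'$ also makes both squares commute. Then $d'=d$ should follow from the uniqueness clause applied to the composite $\im(f)\to \im(g)$. The main obstacle sits here. Uniqueness of factorizations only pins down the middle object and comparison maps up to isomorphism. It does not by itself give that $\cR^-$ maps are epimorphisms or that $\cR^+$ maps are monomorphisms, which is what one would want in order to cancel $f^-$ or $g^+$.

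I would first try to deduce the needed cancellation from the uniqueness of the comparison isomorphism. Given $d'$, factor it as $d'^+d'^-$. Then $(g^+d'^+)(d'^-f^-)$ is another factorization of the same composite, so there is a unique comparison isomorphism with the canonical factorization. Comparing via the factorization of $\eta' f^+$, the uniqueness of these isomorphisms should force $d'=d$.

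If this cannot be made to work, I would settle for existence, with $d$ unique up to the unique isomorphism of factorizations. That suffices for later uses such as \cref{factor}-style degree arguments.
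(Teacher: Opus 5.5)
Your construction is the paper's: you factor $g^-\circ\eta$ and $\eta'\circ f^+$, compare the two resulting factorizations of $g\circ\eta=\eta'\circ f$ via the comparison isomorphism $\theta$, and take $a^+\circ\theta\circ b^-$ as the dashed arrow. Your explicit check of both triangles supplies what the paper leaves implicit, and the uniqueness discussion is unnecessary, since the lemma only asserts existence (you are also right that $\eta,\eta'$ are morphisms, not functors).
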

\begin{proof}
    Consider the subsquare
    \[\begin{tikzcd}
	X && {\im(g)} \\
	\\
	{\im(f)} && {Y'} 
	\arrow["{g^-\circ \eta}", from=1-1, to=1-3]
	\arrow["{f^-}"', from=1-1, to=3-1]
	\arrow["{g^+}", from=1-3, to=3-3]
	\arrow["{\eta'\circ f^+}"', from=3-1, to=3-3]
\end{tikzcd}\]
and factor the horizontal morphisms
\[\begin{tikzcd}
	X & {\im(g^-\circ \eta)} & \\
	{\im(f)} & {\im(g)} \\
	{\im(\eta'\circ f^+)} & {Y'}
	\arrow["{(g^-\circ \eta)^-}", from=1-1, to=1-2]
	\arrow["{f^-}"', from=1-1, to=2-1]
	\arrow["{(g^-\circ \eta)^+}", from=1-2, to=2-2]
	\arrow["{(\eta'\circ f^+)^-}"', from=2-1, to=3-1]
	\arrow["{g^+}", from=2-2, to=3-2]
	\arrow["{(\eta'\circ f^+)^+}"', from=3-1, to=3-2]
\end{tikzcd}~.\]
Since the factorization system is unique up to isomorphism, there exists an isomorphism $$\phi:\im(\eta'\circ f^+) \xra{\simeq} \im(g^-\circ \eta)$$ making the diagram commute. Then there is a morphism $$(g^-\circ \eta)^+\circ \phi \circ (\eta'\circ f^+)^-:\im(f) \to \im(g) $$ as desired.
\end{proof}
\begin{lemma}
    Let $\mathbb{Z}_{\ge 0} \to \Cat $ such that $N \mapsto \cC_N$ be a functor. Then the canonical maps $\Obj(\underset{N \to \infty}\colim \cC_N) \leftarrow \underset{N \to \infty}\colim\Obj(\cC_N)$ and $\mathsf{Mor} (\underset{N \to \infty}\colim \cC_N) \leftarrow \underset{N \to \infty}\colim\mathsf{Mor}(\cC_N)$ are equivalences.
    \label{valuewisecolim} 
\end{lemma}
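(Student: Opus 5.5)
The plan is to check the two claims directly. The key point is that $\ZZ_{\ge 0}$ is a filtered poset, so the colimit in $\Cat$ can be described explicitly and sequential colimits in $\Sets$ commute with finite limits. I work with strict categories and ordinary colimits of sets. For $\Obj$ there is a cleaner route: $\Obj:\Cat\to\Sets$ has a right adjoint, the indiscrete (chaotic) category functor, since a functor $\cC\to \mathrm{Codisc}(S)$ is the same as a function $\Obj(\cC)\to S$. Hence $\Obj$ preserves all colimits, and the first map is a bijection with no further work.

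For morphisms, $\mathsf{Mor}$ is representable by the walking arrow $[1]$, so it preserves limits rather than colimits. Instead I build the colimit by hand. Set $O=\colim_N \Obj(\cC_N)$ and $M=\colim_N \mathsf{Mor}(\cC_N)$, both formed in $\Sets$. Source, target and identity maps pass to the colimit. Composition needs the composable-pairs set $M\times_O M$. Since filtered colimits in $\Sets$ commute with finite limits, we have $M\times_O M\cong\colim_N\bigl(\mathsf{Mor}(\cC_N)\times_{\Obj(\cC_N)}\mathsf{Mor}(\cC_N)\bigr)$, so composition is defined levelwise.

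Concretely, given composable classes $[f]$ and $[g]$, choose representatives at a common stage $N$. Here I use that equality of objects in $O$ is detected at some finite stage, after pushing $f$ and $g$ forward. Then set $[g]\circ[f]=[g\circ f]$; this is independent of the choices because the transition functors preserve composition. Associativity and unitality follow because each relation can be checked at a single finite stage. This produces a category $\cC_\infty$ with $\Obj(\cC_\infty)=O$ and $\mathsf{Mor}(\cC_\infty)=M$, together with a compatible cocone $\cC_N\to\cC_\infty$.

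Next I verify the universal property. Given a cocone $\cC_N\to\cD$, the induced maps $O\to\Obj(\cD)$ and $M\to\mathsf{Mor}(\cD)$ exist and are unique by the universal property of colimits of sets. They assemble into a functor because composition in $\cC_\infty$ is computed at finite stages, where each $\cC_N\to\cD$ is a functor. Therefore $\cC_\infty\cong\colim_N\cC_N$, and under this identification the canonical comparison maps are the identities of $O$ and $M$, hence bijections.

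The main obstacle is the composition step: showing that composable pairs in the colimit come from composable pairs at a finite stage. This is exactly the commutation of sequential colimits with the pullback $M\times_O M$, and it fails for non-filtered diagrams. If "equivalence" is meant $\infty$-categorically, I would apply the same argument to the simplicial nerve. Filtered colimits of quasi-categories are computed levelwise and preserve the inner-horn/Segal conditions, again because filtered colimits commute with finite limits. Objects and morphisms are then the $0$- and $1$-simplices.
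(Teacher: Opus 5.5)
Your argument is correct for strict categories, with $\Obj$ and $\mathsf{Mor}$ the sets of objects and morphisms, and it takes a different route from the paper.

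The paper never builds the colimit by hand. It uses Rezk's fully faithful nerve $\Cat\hookrightarrow\Psh(\bDelta)$ onto complete Segal spaces. It observes that the Segal and completeness conditions are finite-limit conditions, so they survive filtered colimits, and concludes that the nerve preserves filtered colimits. It then evaluates at $[0]$ and $[1]$, where $N\cC[0]$ and $N\cC[1]$ are by definition the spaces of objects and morphisms.

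Your version buys three things:
\begin{itemize}
\item it is elementary and self-contained;
\item it shows exactly where filteredness enters, namely composable pairs $M\times_O M$ arising as a colimit of finite-stage composable pairs;
\item it gives more for objects, since the codiscrete right adjoint makes $\Obj$ preserve all colimits, not just sequential ones.
\end{itemize}
The paper's version treats objects and morphisms uniformly, is homotopy-invariant, and works verbatim for $\infty$-categories, which is the setting the paper intends. In that setting $\Obj$ is the core. The core is a \emph{right} adjoint to $\Spaces\hookrightarrow\Cat$ and does not preserve pushouts, so your adjunction trick only works in the strict reading.

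Your closing remark on the $\infty$-categorical case is the thin spot. The sets of $0$- and $1$-simplices of a quasi-category are not invariant under equivalence, so they are not the $\Obj$ and $\mathsf{Mor}$ of that reading. To make the quasi-category route work you would also need two facts:
\begin{itemize}
\item the levelwise colimit in simplicial sets is a homotopy colimit for the Joyal model structure;
\item $(-)^{\simeq}$ and $\mathsf{Fun}(\Delta^1,-)^{\simeq}$ commute with filtered colimits of quasi-categories. This is true, because invertibility of an edge is witnessed by finitely many simplices and $\Delta^1\times\Delta^n$ is finite.
\end{itemize}
The complete Segal space model used in the paper sidesteps both issues, because its levels $N\cC[0]$ and $N\cC[1]$ already are the spaces of objects and morphisms.
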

\begin{proof}
Recall the fully faithful functor $\Cat \xhookrightarrow{N} \Psh(\bDelta)$ from Rezk \cite{rezk2001model} with image consisting of the complete Segal spaces. The Segal condition and the completeness conditions on a simplicial space each stipulate preservation of finite limit diagrams. Therefore by the definition of filtered categories, a filtered colimit of complete Segal simplicial spaces is again a complete Segal simplicial space. In other words, the fully faithful functor $\Cat \xhookrightarrow{N} \Psh(\bDelta)$ preserves filtered colimits. Since colimits in a presheaf category are computed point-wise, the desired result follows from the identities $N\cC[0]=\Obj(\cC)$ and $N\cC[1]=\Mor(\cC)$.
\end{proof}
This is immediately applicable to Reedy categories.
\begin{observation}
    Let $\cR $ be a Reedy category. The canonical functor $\underset{N \to \infty}{\colim}\cR_{\le N}\to \cR$ is an equivalence. Indeed, \cref{valuewisecolim} applied to objects implies this functor is essentially surjective and \cref{valuewisecolim} applied to morphisms gives full and faithfulness.
    \label{Reedyvaluewisecolim}
\end{observation}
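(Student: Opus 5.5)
The plan is to reduce, via \cref{valuewisecolim}, to the degreewise statement for \emph{every} morphism: for each $N$, $\Obj(\colim_N \cR_{\le N})\to\Obj(\cR)$ is an equivalence and $\Mor(\colim_N \cR_{\le N})\to\Mor(\cR)$ is an equivalence. Once these hold, the comparison functor is essentially surjective (in fact surjective on objects) and is an equivalence on the space of morphisms compatible with source and target. It is therefore fully faithful, hence an equivalence of categories.

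For objects, the filtration $\Obj(\cR_{\le 0})\subseteq \Obj(\cR_{\le 1})\subseteq\cdots$ is exhaustive, because every object $X$ has some finite degree $\deg(X)\in\ZZ_{\ge 0}$ and so lies in $\cR_{\le \deg X}$. Hence $\colim_N \Obj(\cR_{\le N}) = \bigcup_N \Obj(\cR_{\le N}) = \Obj(\cR)$. \cref{valuewisecolim} identifies the left side with $\Obj(\colim_N \cR_{\le N})$, which gives essential surjectivity.

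For morphisms, each $\cR_{\le N}\subseteq\cR$ is a \emph{full} subcategory. A morphism $f:X\to Y$ of $\cR$ therefore lies in $\cR_{\le N}$ for $N\ge\max(\deg X,\deg Y)$, and the inclusions of morphism sets are injective. So $\colim_N\Mor(\cR_{\le N})=\bigcup_N \Mor(\cR_{\le N})=\Mor(\cR)$, compatibly with the source and target maps. Applying \cref{valuewisecolim} to morphisms then identifies this with $\Mor(\colim_N\cR_{\le N})$. Comparing fibers over pairs of objects gives $\Hom_{\colim}(X,Y)\simeq\Hom_\cR(X,Y)$, which is full faithfulness.

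The only subtle point is that filtered colimits of sets or spaces commute with the fiber products over $\Obj\times\Obj$ used to extract hom-spaces. This holds because filtered colimits commute with finite limits in spaces, the same fact invoked in the proof of \cref{valuewisecolim}. Nothing about the Reedy factorization is needed. Only fullness of $\cR_{\le N}$ and finiteness of the degree function are used.
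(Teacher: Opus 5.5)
Your proposal is correct and follows the paper's argument: apply \cref{valuewisecolim} on objects to get essential surjectivity and on morphisms to get full faithfulness. You also supply details the paper leaves implicit, namely that the degree filtration is exhaustive, that each $\cR_{\le N}$ is a full subcategory (so objects and morphisms form increasing unions exhausting $\cR$), and that filtered colimits commute with the fiber product over $\Obj\times\Obj$ used to extract hom-spaces.
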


\subsection{Hochschild Homology}
\label{Hhomology}
First recall that the conventional definition of Hochschild homology is the homology of the Hochschild chain complex defined below.
\begin{definition}
Given a ring $A$, its {\bf Hochschild chain complex, ${\sf HC_{\bullet}(A)}$}, is defined by ${\sf HC}_n(A):=A^{\otimes n+1}$ and the differential $b_n := \Sigma^n_{j=0}(-1)^jd_j$. Here the maps $d_i:{\sf HC}_n(A)\to {\sf HC}_{n-1}(A)$ are as follows:
\[
d_i(a_0\otimes a_1\otimes ... \otimes a_n)=
\begin{cases} 
      a_0a_1\otimes a_2\otimes ... \otimes a_n & i=0 \\
      a_na_0\otimes a_1\otimes ... \otimes a_{n-1} & i=n \\
      a_0\otimes a_1\otimes ... \otimes a_i a_{i+1}\otimes ...\otimes a_n & \text{otherwise}
   \end{cases}
\]
for each $n \in \ZZ_{\ge 0}$. 
\end{definition}

In order to generalize from rings to categories, it helps to look at Hochschild homology through the lens provided by the Dold-Kan correspondence \cite{b1ec9719-9e94-309b-b5d6-db2ea7da72f4}. Recall that the Dold-Kan correspondance is an equivalence between the category of (nonnegatively graded) chain complexes and the category of simplicial abelian groups. Through this viewpoint, the chain complex ${\sf HC}_{\bullet}(A)$ corresponds to a simplicial abelian group: the {\bf cyclic bar construction}. The cyclic bar construction can then be generalized to $\infty$-categories. This perspective motivates \cite{ayala2025symmetriescyclicnerve} Definition 3.1.2 of the Hochschild homology of a $\infty$-category, which is a colimit of a simplicial object.

We follow the framework developed in \cite{ayala2025symmetriescyclicnerve} where they understand $\infty$-categories by probing them with quivers. 
\begin{definition}
    A {\bf quiver} is a category which is the free category on a finite directed graph. The category $\quiv$ is the full subcategory $\quiv \subset \Cat$ consisting of those categories that are quivers.
    \label{factorizationhomology}
\end{definition}

The cyclic nature of Hochschild homology is well-documented, and can be used as a computational tool. This cyclicity can be realized through Definition 2.4.1 in \cite{ayala2025symmetriescyclicnerve}. 
\begin{definition}
    An object in the {\bf paracyclic category $\para$} is a nonempty linearly ordered set $I$ for which, for each $i < j$ in $I$ the interval $[i,j] \subset I$ is finite and equipped with an action by the additive group $\ZZ$ for which for each $i \in I$ there is a relation $i < 1 \cdot i$ for all $i \in I$. A morphism in $\para$ is a $\ZZ$-equivariant (weakly) order-preserving map. Composition is composition of maps. Identities are identity maps. 
    \label{paracyclic}
\end{definition}
This category admits a natural functor as described in Notation 3.1.1 in \cite{ayala2025symmetriescyclicnerve}
\begin{align*}
    \para &\to \quiv \stepcounter{equation}\tag{\theequation} \label{functor}\\
    \lambda = (\ZZ \curvearrowright I) &\mapsto \overline \lambda := I_{/ \ZZ}
\end{align*}
where $I_{/ \ZZ}$ is the cyclically directed graph of the $\ZZ$ quotient. For example if the interval $[i, j]$ is given by $\{i=:i_1, ..., i_k:=1 \cdot i\}$ then $I_{/ \ZZ}$ looks like
\[\begin{tikzcd}
	& {i_2} \\
	{i_1} && \vdots \\
	& {i_k}
	\arrow[curve={height=-12pt}, from=1-2, to=2-3]
	\arrow["{i_1<i_2}", curve={height=-12pt}, from=2-1, to=1-2]
	\arrow[curve={height=-12pt}, from=2-3, to=3-2]
	\arrow["{i_k<i_1}", curve={height=-12pt}, from=3-2, to=2-1]
\end{tikzcd}~~~~.\]
Additionally we have the functor 
\begin{align*}
    \bdelta &\to \para \\
    [p]&\mapsto \chi_p := [p]^{\star \ZZ} ~.
    \stepcounter{equation}\tag{\theequation} \label{Chip}
\end{align*}
where $\star$ is the join of posets, so $[p]^{\star \ZZ}$ is the $\ZZ$-fold join of $[p]$.
Composing, \cref{functor} and \cref{Chip}, gives rise to a new functor, 
\[
\chi:\bDelta \to \para \to \quiv~.
\]
\begin{definition}
(\cite{ayala2025symmetriescyclicnerve} Definition 4.1.1) The {\bf Hochschild homology of a category $\cC$, $\hh(\cC)$} is the space
    \[
    \hh(\cC) :=
\colim \left( \bDelta^{\sf op} \xrightarrow{\chi} {\sf Quiv}^{\sf op} \xrightarrow{\hom_\Cat(\_, \cC)} \Gpd \xrightarrow{|\_|} \spaces \right)
    \]
    where $|\_|$ is the geometric realization functor.
    \label{defhh}
\end{definition}

 Towards working with this definition, recall the unstraightening construction functor $\Un: {\sf Fun}(\cC,\Spaces) \longrightarrow \Cat_{(\infty,1)/\cC}$ and \cite{lurie2008highertopostheory} Corollary 3.3.4.6.  
\begin{lemma}
    Let $\cC$ be a category and $F \in {\sf Fun}(\cC,\Spaces)$ be a functor into $\spaces$. Then there is a canonical equivalence between spaces $\colim(F) \simeq |\Un F|$~.
    \label{colimandun}
\end{lemma}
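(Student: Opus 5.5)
The plan is to prove the equivalence by Yoneda: I will show that $\colim(F)$ and $|\Un F|$ corepresent the same functor $\spaces \to \spaces$. Throughout I use the straightening/unstraightening equivalence of \cite{lurie2008highertopostheory}. It identifies ${\sf Fun}(\cC,\spaces)$ with the full subcategory ${\sf LFib}_{/\cC} \subset \Cat_{(\infty,1)/\cC}$ of left fibrations over $\cC$, and $\Un F \to \cC$ is the left fibration whose fiber over $c$ is $F(c)$.

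First, $\colim \colon {\sf Fun}(\cC,\spaces) \to \spaces$ is left adjoint to the constant-diagram functor $\underline{(\_)} \colon \spaces \to {\sf Fun}(\cC,\spaces)$. So for any space $X$ there is a natural equivalence
\[
\hom_{\spaces}(\colim F, X) \simeq \hom_{{\sf Fun}(\cC,\spaces)}(F, \underline{X}).
\]
Second, I identify the unstraightening of the constant functor $\underline{X}$ with the projection $\cC \times X \to \cC$. This follows from naturality of unstraightening in the base, by pulling back along $\cC \to \ast$, since over $\ast$ unstraightening is the identity on spaces. Because unstraightening is an equivalence onto ${\sf LFib}_{/\cC}$, this gives
\[
\hom_{{\sf Fun}(\cC,\spaces)}(F,\underline{X}) \simeq \hom_{{\sf LFib}_{/\cC}}(\Un F, \cC\times X).
\]

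Third, I simplify the right-hand side. ${\sf LFib}_{/\cC}$ is a full subcategory of $\Cat_{(\infty,1)/\cC}$: every edge of a left fibration is coCartesian, so any functor over $\cC$ between left fibrations automatically preserves coCartesian edges. Hence this mapping space equals $\hom_{\Cat_{/\cC}}(\Un F, \cC \times X)$. By the universal property of the product, that is $\hom_{\Cat}(\Un F, X)$. Since $X$ is an $\infty$-groupoid and $|\_|$ is left adjoint to the inclusion $\spaces \subset \Cat$, this is $\hom_{\spaces}(|\Un F|, X)$. Each equivalence is natural in $X$, so the composite is too, and Yoneda gives $\colim F \simeq |\Un F|$.

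The step I expect to need the most care is the second one. It requires the identification $\Un(\underline{X}) \simeq \cC\times X$ to be natural in $X$, and it requires that mapping spaces in ${\sf LFib}_{/\cC}$ agree with those in the overcategory. I would handle the first by base change of straightening along $\cC \to \ast$. The second is exactly the fullness observation made in the third step.
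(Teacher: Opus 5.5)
Your proof is correct, but it is not the paper's route. The paper gives no argument for this lemma; it imports it directly from \cite{lurie2008highertopostheory} Corollary 3.3.4.6.

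You instead derive it formally from two facts. First, straightening gives an equivalence ${\sf Fun}(\cC,\spaces)\simeq{\sf LFib}_{/\cC}$. Second, this equivalence is compatible with base change along $\cC\to\ast$, so unstraightening carries the right adjoint $X\mapsto\underline{X}$ of $\colim$ to the right adjoint $X\mapsto\cC\times X$. Your mapping-space computation then identifies the left adjoint of the latter as $E\mapsto|E|$. It uses fullness of ${\sf LFib}_{/\cC}\subset\Cat_{/\cC}$, the universal property of the product over $\cC$, and $|\_|\dashv$ inclusion. Yoneda matches the two left adjoints.

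What your version buys is a short, model-independent argument that stays inside $\infty$-category theory and shows exactly where the canonical equivalence comes from. What the citation buys is brevity and a statement for left fibrations over arbitrary simplicial sets. In the same section Lurie treats $\Cat_\infty$-valued diagrams, whose colimit is the localization of the classifying coCartesian fibration at its coCartesian edges; your case is the one where every edge is inverted.

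The one point that deserves an extra sentence is your claim that unstraightening over a point is the identity of $\spaces$. It is true. In any case your Yoneda argument only needs it to be some autoequivalence of $\spaces$: any autoequivalence fixes the terminal object $\ast$ and preserves colimits, hence is equivalent to the identity.
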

We will not discuss unstraightening in full generality.  We will quickly review the simpler Grothendieck construction. The unstraightening is an $\infty$-categorical version of the Grothendieck construction for functors from an ordinary category to $\Gpd$. For a more comprehensive treatment of unstraightening see chapter 3 section 2 in \cite{lurie2008highertopostheory}.

\begin{definition}
    Let $F:\cD\to \Gpd$ be a functor from any category to the groupoids of categories. The {\bf Grothendieck construction} for $F$  is the category $\Un(F)$ in which an object is a pair $(d,x)$, with $d \in \Obj(\cD)$  and $x \in \Obj(F(d))$; and in which a morphism from $(d_1,x_1)$ to $(d_2,x_2))$ is a pair $(f,g)$ where $f:d_1 \to d_2$ in $\cD$, and $g:F(f)(x_1)\to x_2$ in $F(d_2)$. Composition of morphisms is defined as $(f,g)\circ(f',g')=(f\circ f',g\circ F(f)(g'))$.
\end{definition}
The domain category plays a crucial role in this construction, so we look to simplify it through the use of final functors, starting with \cite{ayala2025symmetriescyclicnerve} Lemma 2.4.5 and Lemma 3.6.5.
 \begin{lemma}
     $\bDelta^{\sf op} \xrightarrow{(\ref{Chip})} \para^{\sf op}$ is a final functor. Therefore the canonical map between spaces 
     \label{finalfun}

\[
    \hh(\cC) := \colim \left( \bDelta^{\sf op} \xrightarrow{\chi} {\sf Quiv}^{\sf op} \xrightarrow{\hom_\Cat(\_, \cC)} \spaces \right) 
    \xrightarrow{\simeq} \colim \left( \para^{\sf op} \to {\sf Quiv}^{\sf op} \xrightarrow{\hom_\Cat(\_, \cC)} \spaces \right) 
    \label{defhhpara}
\]
is an equivalence.
 \end{lemma}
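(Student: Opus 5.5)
The plan is to verify finality with Quillen's Theorem A in its $\infty$-categorical form (\cite{lurie2008highertopostheory}, Theorem 4.1.3.1). The equivalence of colimits then follows formally (\cite{lurie2008highertopostheory}, Proposition 4.1.1.8). This uses the fact that the functor $\bDelta^{\sf op} \xrightarrow{\chi} \quiv^{\sf op}$ is, by construction, the composite of (\ref{Chip})$^{\sf op}$ with (\ref{functor})$^{\sf op}$, so restricting the $\para^{\sf op}$-shaped diagram along (\ref{Chip}) recovers exactly the diagram in \cref{defhh}.

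Concretely, finality of $\bDelta^{\sf op} \to \para^{\sf op}$ means the following. For each $\lambda = (\ZZ \curvearrowright I) \in \para$, consider the category whose objects are pairs $([p], \chi_p \to \lambda)$ with $\chi_p \to \lambda$ a morphism of $\para$. The condition is that the opposite of this category is weakly contractible. Since a category and its opposite have the same classifying space, it suffices to show that $\bDelta_{/\lambda} := \bDelta \times_{\para} \para_{/\lambda}$ is weakly contractible.

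The first step is to identify this category as the category of simplices of a simplicial set $X_\lambda$. A $\ZZ$-equivariant order-preserving map $[p]^{\star\ZZ} \to I$ is determined by its restriction to one copy of $[p]$. That restriction is an order-preserving map $[p] \to I$, $k \mapsto i_k$, and extending it equivariantly is order-preserving exactly when $i_p \le 1\cdot i_0$. Thus
\[
X_\lambda[p] = \{\, i_0 \le i_1 \le \cdots \le i_p \le 1\cdot i_0 \text{ in } I \,\},
\]
with the evident simplicial structure. By construction, $\bDelta_{/\lambda}$ is the category of simplices of $X_\lambda$, and the classifying space of the category of simplices of a simplicial set is its geometric realization. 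So it remains to show that $|X_\lambda|$ is contractible.

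I expect this last step to be the main, though mild, obstacle. The plan is to observe that $X_\lambda$ is a simplicial subset of the nerve $N(I)$ of the poset $I$: it consists of the chains contained in some window $[i, 1\cdot i]$. Hence $X_\lambda = \bigcup_{i \in I} N([i,1\cdot i])$. Each $N([i,1\cdot i])$ is the nerve of a finite linear order, hence a simplex. If $i < i'$ are consecutive in $I$, then $N([i,1\cdot i]) \cap N([i',1\cdot i']) = N([i', 1\cdot i])$, which is again a simplex, hence contractible. Moreover, any chain lying in two windows lies in every window between them.

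Consequently, $X_\lambda$ is the colimit of a $\ZZ$-indexed zigzag of simplices glued along contractible common faces. Equivalently, $|X_\lambda|$ is an increasing union of the finite subcomplexes $\bigcup_{a \le i \le b} N([i,1\cdot i])$. Each of these is contractible by induction on the number of windows, since adding one simplex along a contractible intersection preserves contractibility. An increasing union of contractible CW complexes is contractible, so $|X_\lambda| \simeq *$. Morally, $|X_\lambda|$ is the universal cover $\RR$ of the circle $|\overline\lambda|$. This proves the finality claim, and with it the stated equivalence.
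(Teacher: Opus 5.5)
Your proof is correct, but it takes a different route from the paper. The paper gives no argument for this lemma. It imports finality of $\bDelta^{\sf op}\to\para^{\sf op}$ from \cite{ayala2025symmetriescyclicnerve} (Lemma 2.4.5), and then obtains the equivalence of colimits from the factorization $\chi=(\ref{functor})\circ(\ref{Chip})$, exactly as your last step does.

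You instead verify finality directly. Theorem A reduces it to weak contractibility of $\bDelta_{/\lambda}$, and you correctly identify this category with the category of simplices of $X_\lambda=\hom_{\para}(\chi_{(-)},\lambda)\subset N(I)$. Your exhaustion argument works because of the convexity you point out. For $a\le i\le b$ with $b'$ the successor of $b$, each intersection $N([b',1\cdot i])$ lies inside $N([b',1\cdot b])$, so every new window meets the previous union in a single nonempty simplex.

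Your route gives a self-contained, purely $1$-categorical proof with visible geometric content: $|X_\lambda|\simeq\mathbb{R}$, the universal cover of $|\overline\lambda|\simeq S^1$. It also shows why finality here is a genuine contractibility statement rather than a formal one. Since $X_\lambda$ has infinitely many vertices, it is not representable, so (\ref{Chip}) has no right adjoint that would make finality automatic. The paper's citation buys brevity, and keeps the lemma inside the framework of \cite{ayala2025symmetriescyclicnerve}, where the companion finality of $\para\to\quiv_{/S^1}$ is also proved.
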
 
\begin{lemma}
  The map $\para \xrightarrow{(\ref{functor})} \quiv_{/S^1}$ is final. 
\end{lemma}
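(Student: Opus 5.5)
The plan is to apply Quillen's Theorem A in its $\infty$-categorical form (Joyal, Lurie). Finality of $\para \to \quiv_{/S^1}$ follows once we know that for every object $(Q \xrightarrow{\sigma} S^1)$ of $\quiv_{/S^1}$ the comma category
\[
\cF_{\sigma} := \para \times_{\quiv_{/S^1}} (\quiv_{/S^1})_{\sigma/}
\]
is weakly contractible. An object of $\cF_\sigma$ is a pair $(\lambda, \alpha)$ with $\lambda = (\ZZ \curvearrowright I)$ and $\alpha : Q \to \overline{\lambda}$ a functor over $S^1$. A morphism is a paracyclic map compatible with the $\alpha$'s.

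The first step is to translate $\cF_\sigma$ into combinatorics of $\ZZ$-posets. Here $S^1$ is the free category on one loop, so $\sigma$ assigns to each edge $e$ of $Q$ a winding number $w(e) \in \ZZ_{\ge 0}$. The quotient functor $I \to \overline\lambda$ is the pullback of the universal cover of $S^1$, and this cover is functorial in $\lambda$. Pulling it back along $\sigma$ gives a $\ZZ$-equivariant category $\widetilde Q_\sigma$: its objects are $\mathrm{Obj}(Q) \times \ZZ$, and an edge $e : v \to v'$ lifts to $(v, k) \to (v', k + w(e))$. By covering theory (lifting on connected components, after fixing basepoints), a functor $\alpha : Q \to \overline\lambda$ over $S^1$ then corresponds to a $\ZZ$-equivariant functor $\widetilde Q_\sigma \to I$. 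Such a functor must send the lift of each edge $e$ to a relation in $I$ lying within $w(e)$ turns. Under this identification, $\cF_\sigma$ becomes the category of paracyclic objects under the fixed $\ZZ$-category $\widetilde Q_\sigma$, with morphisms the equivariant order-preserving maps under it.

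The second step is to check that $\cF_\sigma$ is nonempty. Take $I$ to be a $\ZZ$-equivariant linear refinement of the images of the vertices. For instance, send all vertices in the fiber over $k$ to a single point $k \in \ZZ = \chi_0$. This respects every edge, because the weak inequalities $k \le k + w(e)$ hold.

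The third step, which I expect to be the main obstacle, is contractibility. I would try to show that $\cF_\sigma^{\sf op}$ is filtered, i.e. that $\cF_\sigma$ is cofiltered. Given two objects $(I, \alpha)$ and $(J, \beta)$, I would form the $\ZZ$-equivariant lexicographic refinement of $I \times J$ restricted to the image of $\widetilde Q_\sigma$. This is linearly ordered with finite intervals, it receives a map from $\widetilde Q_\sigma$, and it projects order-preservingly onto both $I$ and $J$. The delicate points are two. First, the projections must be weakly order-preserving, which forces the product order to be chosen carefully (ordering first by $I$ and breaking ties by $J$ does not obviously respect $J$). Second, parallel morphisms must be equalized. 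If the cofiltered approach fails, the fallback is to compare with the case $Q = \ast$. There the comma category has the initial object $(\chi_0, 0)$. One would then induct on the number of edges of $Q$, showing that adding an edge of winding $w$ changes $\cF_\sigma$ by a cofinal subcategory inclusion. Alternatively, one could reduce via \cref{finalfun} to the finality statement for $\bDelta^{\sf op} \to \para^{\sf op}$, which is already established.
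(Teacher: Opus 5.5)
\textbf{There is a genuine gap in your third step.} Weak contractibility of $\mathcal F_\sigma$ is not established, and the mechanism you propose for it is false. In general $\mathcal F_\sigma$ is neither cofiltered nor filtered, so no choice of order on $I\times J$ can work.

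Work in your own setup. Take $Q$ with two vertices $a,b$ and no edges (or a single edge $a\to b$ of winding $0$). Consider the objects $x=(\ZZ;\,a\mapsto 0,\,b\mapsto 0)$ and $y=(\ZZ;\,a\mapsto 0,\,b\mapsto 2)$. Suppose $(I;i_a,i_b)$ mapped to both, by $f$ and $g$. Equivariance gives $f(1\cdot i_a)=g(1\cdot i_a)=1$. Then $g(i_b)=2$ forces $1\cdot i_a<i_b$, while $f(i_b)=0$ forces $i_b<1\cdot i_a$. Dually, an object receiving maps from both would need $i_b=i_a$ and $i_b=2\cdot i_a$.

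Here $\mathcal F_\sigma$ is nonetheless contractible, but for a different reason. The assignment $(I;i_a,i_b)\mapsto(\ZZ\cdot\{i_a,i_b\};i_a,i_b)$ is right adjoint to the inclusion of the full subcategory of objects generated by the marked points. That subcategory is the zigzag $\cdots\to(\ZZ;0,k)\leftarrow(\tfrac12\ZZ;0,k+\tfrac12)\to(\ZZ;0,k+1)\leftarrow\cdots$, which has no initial object. So contractibility has to come from an actual analysis of these categories of $\ZZ$-equivariant interleavings generated by $\widetilde Q_\sigma$. Your induction on edges is not formulated, and this example shows that adding one isolated vertex to $Q=\ast$ already destroys the initial object $(\ZZ,0)$ you use there.

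\textbf{Your model of $\quiv_{/S^1}$ also needs repair.} The paper gives no proof of this lemma; it imports it as Lemma 3.6.5 of \cite{ayala2025symmetriescyclicnerve}, so the definitions there are what must be used.

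If $S^1$ is the free category on one loop, there is no nontrivial structure map $\overline\lambda\to S^1$ natural in $\lambda$. The paracyclic maps $\lfloor\,\cdot\,\rfloor,\lceil\,\cdot\,\rceil\colon\tfrac12\ZZ\to\ZZ$ force the two edges of $\overline{\tfrac12\ZZ}$ to have windings $(0,m)$ and $(m,0)$, where $m$ is the winding of the generator of $\overline{\ZZ}$. Hence $m=0$, and by similar maps every structure map is trivial. Pulling back the cover then gives $\overline\lambda\times\ZZ$, not $I$.

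The canonical structure map instead goes to the groupoid $\sB\ZZ$, classified by the $\ZZ$-torsor $I\to\overline\lambda$. With that model your first step is fine, but windings are arbitrary integers. For a loop of winding $-1$ there is no functor to any $\overline\lambda$ over $S^1$, so your second step fails and $\mathcal F_\sigma=\emptyset$. Thus the lemma is false on the unrestricted slice, and whatever restriction the source places on $\sigma$ must enter the proof.

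\textbf{Mind the variance.} The Remark uses the lemma to compare colimits indexed by $\para^{\sf op}$ and $\quiv^{\sf op}_{/S^1}$. That requires contractibility of the categories of pairs $(\lambda,\overline\lambda\to Q)$ over $S^1$, not of your $\mathcal F_\sigma$. Your last fallback, via finality of $\bDelta^{\sf op}\to\para^{\sf op}$, lives in that other variance, and would in any case only move the problem to $\bDelta^{\sf op}\to\quiv^{\sf op}_{/S^1}$.
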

\begin{remark}
  These two lemmas imply the canonical map between spaces,  
  \[
    \colim \left( \bDelta^{\sf op} \xrightarrow{\chi} {\sf Quiv}^{\sf op} \xrightarrow{\hom_\Cat(\_, \cC)} \spaces \right) 
    \xrightarrow{\simeq} \colim \left( \quiv_{/S^1}^{\sf op} \to {\sf Quiv}^{\sf op} \xrightarrow{\hom_\Cat(\_, \cC)} \spaces \right) =: \int_{S^1} \cC ~,
\]
is an equivalence.
Here, $\int_{S^1} \cC$ is the factorization homology of $\cC$ over $S^1$. This is Definition 4.1.1 in \cite{ayala2025symmetriescyclicnerve} with $M=S^1$. The results of this paper are also relevant to those looking to calculate the factorization homology over the circle of Reedy categories due to the following corollary.
\end{remark}
\begin{corollary} 
Let $\cC$ be a category. Then there is a canonical equivalence between spaces
    \[
    \hh(\cC) \simeq 
    \int_{S^1}\cC ~.
\]
\label{HH=fact}
\end{corollary}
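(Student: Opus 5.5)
The plan is to obtain the equivalence $\hh(\cC)\simeq\int_{S^1}\cC$ by composing the two finality results above, using the standard fact that restricting a diagram along a final functor does not change its colimit. Concretely, consider the composite
\[
\bDelta^{\sf op}\xrightarrow{(\ref{Chip})}\para^{\sf op}\xrightarrow{(\ref{functor})}\quiv_{/S^1}^{\sf op}\longrightarrow\quiv^{\sf op}\xrightarrow{\hom_\Cat(\_,\cC)}\Gpd\xrightarrow{|\_|}\spaces~.
\]
By construction, the composite of the first three arrows is the functor $\chi^{\sf op}$ used in \cref{defhh}. So $\hh(\cC)$ is the colimit of the restriction of the diagram $F:=|\hom_\Cat(\_,\cC)|\colon\quiv_{/S^1}^{\sf op}\to\spaces$ along $\bDelta^{\sf op}\to\quiv_{/S^1}^{\sf op}$. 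On the other hand, $\int_{S^1}\cC$ is by definition $\colim F$.

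First I check that the composite $\bDelta\to\para\to\quiv$ really agrees with $\chi$ and factors through $\quiv_{/S^1}$. Each $\overline{\lambda}=I_{/\ZZ}$ is a cyclic quiver with a canonical map to $S^1$. A $\ZZ$-equivariant order-preserving map induces a map of quotients over $S^1$, so (\ref{functor}) lifts to the slice. This is presumably the content of the cited Lemma 3.6.5, and I would take it as given.

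Second, I combine the two lemmas. The first lemma above (\cref{finalfun}) says $\bDelta^{\sf op}\to\para^{\sf op}$ is final. The second lemma, interpreted in the variance needed for colimits of contravariant diagrams, says $\para^{\sf op}\to\quiv_{/S^1}^{\sf op}$ is final. Final functors are closed under composition: by Quillen's Theorem A / Joyal's criterion, finality is equivalent to the colimit-preservation property for every diagram, and that property visibly composes. Hence $\bDelta^{\sf op}\to\quiv_{/S^1}^{\sf op}$ is final. Therefore the canonical map $\colim(F|_{\bDelta^{\sf op}})\to\colim F$ is an equivalence, which is exactly the claimed equivalence $\hh(\cC)\xrightarrow{\simeq}\int_{S^1}\cC$. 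The map is canonical because it is the comparison map of colimits induced by restriction.

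The only step I expect to need care with is bookkeeping rather than mathematics. The second lemma is stated for $\para\to\quiv_{/S^1}$ rather than its opposite. I must make sure the finality notion used there is the one that preserves colimits of functors out of $\quiv_{/S^1}^{\sf op}$, which is how it is used in \cite{ayala2025symmetriescyclicnerve}. I must also note that inserting $|\_|\colon\Gpd\to\spaces$ is harmless, since $\Gpd\subset\spaces$ and finality applies to any target $\infty$-category admitting the colimits. With these conventions fixed, the corollary is immediate from the remark preceding it.
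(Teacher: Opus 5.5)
Your proposal is correct and is essentially the paper's own argument: the paper obtains the corollary directly from the preceding remark, which composes the finality of $\bDelta^{\sf op}\to\para^{\sf op}$ with the finality of $\para\to\quiv_{/S^1}$ (read in the opposite variance, as you rightly flag) to conclude that the canonical comparison map of colimits $\hh(\cC)\to\int_{S^1}\cC$ is an equivalence. Your additional checks, that the composite recovers $\chi$ and that final functors are closed under composition, are exactly the bookkeeping the paper leaves implicit.
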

% This is a simplification as $\para^{\sf op}$ is easier to compute with as objects can be explicitly described. Before looking more closely at $\para$, we can in fact take this one step further with the final functor $\bDelta^{\sf op} \to \para^{\sf op}$. This final functors along with the one discussed above allow us to relate factorization homology with the colimit of a simplicial abelian group which by Dold-Kan is equivalent to a chain complex. If you follow through these equivalences you will see that the resulting chain complex is related to the Hochschild chain complex motivating the following definition. 
 These lemmas also allow us to replace our domain category with $\para$. The objects of $\para$ have a nice representation, as seen in Lemma 2.4.3 \cite{ayala2025symmetriescyclicnerve}.
\begin{lemma}
    Every object in $\para$ is non-canonically isomorphic with $\frac{1}{\ell} \ZZ$ for some $\ell \in \NN$.
    \label{paraobjects}
\end{lemma}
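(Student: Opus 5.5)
The plan is to build an explicit isomorphism from an arbitrary object of $\para$ to a standard model $\frac{1}{\ell}\ZZ$. Here $\frac{1}{\ell}\ZZ$ carries its usual order and the $\ZZ$-action $k\cdot x = x+k$. Let $\lambda=(\ZZ\curvearrowright I)$ be an object of $\para$.

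First, pick any base point $i\in I$. By \cref{paracyclic} we have $i<1\cdot i$, and the interval $[i,1\cdot i]$ is finite. We therefore write it as
\[
i=i_0<i_1<\cdots<i_\ell=1\cdot i,
\]
with $\ell\ge 1$.

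Next, we show that $I$ is the disjoint union of the translates $k\cdot[i,1\cdot i)$ for $k\in\ZZ$. Given $x\in I$, the elements $k\cdot i$ form a strictly increasing sequence in $k$. This uses order-preservation of the action, since $i<1\cdot i$ implies $k\cdot i<(k+1)\cdot i$. This sequence is unbounded in both directions relative to $x$. Indeed, if $k\cdot i\le x$ for all $k$, then the interval $[i,x]$ would contain infinitely many elements $k\cdot i$, contradicting finiteness; the same argument applies below. Since the intervals are finite, there is a unique $k$ with $k\cdot i\le x<(k+1)\cdot i$. Then $(-k)\cdot x\in[i,1\cdot i)$, so $x=k\cdot i_j$ for a unique $0\le j<\ell$. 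Here we use that the action is by order automorphisms, so it restricts to bijections $[i,1\cdot i)\to[k\cdot i,(k+1)\cdot i)$.

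Now define $\phi:I\to\frac{1}{\ell}\ZZ$ by $\phi(k\cdot i_j)=k+\frac{j}{\ell}$. By the decomposition above, $\phi$ is a well-defined bijection. It is $\ZZ$-equivariant by construction, since $m\cdot(k\cdot i_j)=(m+k)\cdot i_j$. It is strictly order-preserving because both orders are lexicographic in $(k,j)$. For $I$ this is exactly the block decomposition; for $\frac{1}{\ell}\ZZ$ it is clear. Its inverse is then also equivariant and order-preserving, so $\phi$ is an isomorphism in $\para$.

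Non-canonicity comes from the choice of base point $i$. Different choices change $\phi$ by a translation by some $\frac{j}{\ell}$. The integer $\ell=|[i,1\cdot i)|$, however, is independent of $i$, since the translates tile $I$ with blocks of constant size.

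The main point needing care is the tiling step. We must justify that the action is by order automorphisms, so that $x<y$ implies $k\cdot x<k\cdot y$. I read this as implicit in the definition, since the action preserves the linear order. The definition's phrase "$i<1\cdot i$ for all $i$" then rules out $\ell=0$ and, together with finiteness of intervals, gives both the unboundedness and the uniqueness of $k$.
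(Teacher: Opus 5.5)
Your proof is correct. The paper does not prove this lemma itself; it imports it as Lemma 2.4.3 of \cite{ayala2025symmetriescyclicnerve}. So your argument is a self-contained substitute for a citation rather than a competing route: you choose a basepoint $i$, tile $I$ by the translates $k\cdot[i,1\cdot i)$, and send $k\cdot i_j\mapsto k+\frac{j}{\ell}$. It also makes the non-canonicity precise. The automorphisms of $\frac{1}{\ell}\ZZ$ in $\para$ are exactly the translations by elements of $\frac{1}{\ell}\ZZ$, and these match your changes of basepoint.

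The caveat you flag is substantive, not pedantic. The definition of $\para$ as written in the paper never says the $\ZZ$-action is by order-preserving maps, and without that the lemma is false. Take $I=\ZZ$ with $1\cdot$ acting by $2k\mapsto 2k+3$ and $2k+1\mapsto 2k+2$. This is a bijection with $x<1\cdot x$, and intervals are finite. But any order isomorphism $\ZZ\to\frac{1}{\ell}\ZZ$ has the form $n\mapsto (n+c)/\ell$, so equivariance would force $1\cdot x=x+\ell$ for all $x$, which fails. So the reading you adopt, which is the one in the cited source, is the one the lemma needs.

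One small correction concerns where you use order-preservation. The strict monotonicity $k\cdot i<(k+1)\cdot i$ does not need it: it is just the axiom $y<1\cdot y$ applied to $y=k\cdot i$. Order-preservation is genuinely used in three places:
\begin{itemize}
\item when you move $x$ back into $[i,1\cdot i)$ by $(-k)\cdot$;
\item when you check that $k\cdot i_j$ lies in $[k\cdot i,(k+1)\cdot i)$, for the comparison across blocks;
\item when you compare $k\cdot i_j$ with $k\cdot i_{j'}$ inside a single block.
\end{itemize}
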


The unstraightening of our functor now has a more explicit form. 
\begin{observation}
        \label{Un<n}
        For $\cR$ a Reedy category,
         {\bf $\Un \left( \para^{\sf op} \to {\sf Quiv}^{\sf op} \xrightarrow{\hom_\Cat(\_, \cR_{\le n})} \spaces \right)$} is an ordinary category where an object is a pair $(\lambda, \ell)$ with $\lambda \in \para$ and $\ell:\overline{\lambda} \to \cR_{\le n}$ a functor. A morphism $(\lambda, \ell) \to (\lambda', \ell')$ then is a morphism $\lambda \xra{f} \lambda'$ in $\para$ and a natural isomorphism: 
    \[\begin{tikzcd}
	{\overline\lambda} &[-1em]&[-1em] {\overline{\lambda'}} \\[-1em]
	& \rotatebox[origin=c]{30}{$\overset{\simeq}{\implies}$}  \\
	& \cR_{\le n}
	\arrow["{\overline f}", from=1-1, to=1-3]
	\arrow["\ell"', from=1-1, to=3-2]
	\arrow["{\ell'}", from=1-3, to=3-2]
    \end{tikzcd}.\]
    \end{observation}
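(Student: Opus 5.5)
The plan is to reduce the unstraightening to the classical Grothendieck construction and then unwind the definitions. The key point is that the functor in question takes values in $1$-groupoids, not in arbitrary spaces.

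First, I will check this. For each $\lambda \in \para$, the quiver $\overline\lambda$ is an ordinary category. Since $\cR_{\le n}$ is an ordinary category (a full subcategory of $\cR$), $\hom_\Cat(\overline\lambda,\cR_{\le n})$ is the maximal subgroupoid of the functor category ${\sf Fun}(\overline\lambda,\cR_{\le n})$:
\begin{itemize}
\item its objects are functors $\ell:\overline\lambda\to\cR_{\le n}$;
\item its morphisms are natural isomorphisms;
\item it has no higher homotopy, because all mapping sets in $\cR_{\le n}$ are discrete.
\end{itemize}
A morphism $f:\lambda\to\lambda'$ in $\para$ acts contravariantly by precomposition with $\overline f$, using the functor (\ref{functor}). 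So the composite $\para^{\sf op}\to\quiv^{\sf op}\to\spaces$ factors through the inclusion $\Gpd\hookrightarrow\spaces$, as a strict functor out of the ordinary category $\para^{\sf op}$.

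Second, I will invoke the standard comparison (\cite{lurie2008highertopostheory}, Chapter 3, Section 2) between unstraightening and the Grothendieck construction. For a strict functor from an ordinary category into groupoids, the unstraightening is presented by the Grothendieck construction of the same functor. In particular it is an ordinary category, which gives the first claim of the observation.

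Third, I will unwind the Grothendieck construction definition for $F=\hom_\Cat(\overline{(\_)},\cR_{\le n})$ on $\para^{\sf op}$.
\begin{itemize}
\item An object is a pair $(\lambda,\ell)$ with $\ell\in F(\lambda)$.
\item A morphism lies over an arrow of $\para^{\sf op}$, that is, over some $f:\lambda\to\lambda'$ in $\para$, and connects the pairs over $\lambda'$ and over $\lambda$.
\item Its extra datum is a morphism in $F(\lambda)$ from $F(f)(\ell')=\ell'\circ\overline f$ to $\ell$. Because $F(\lambda)$ is a groupoid, this is exactly a natural isomorphism filling the triangle in the observation.
\item Composition is pasting of these $2$-cells, as in the Grothendieck formula $(f,g)\circ(f',g')=(f\circ f',\,g\circ F(f)(g'))$.
\end{itemize}

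The main obstacle is the variance convention. The literal Grothendieck construction over $\para^{\sf op}$ has its arrows pointing from $(\lambda',\ell')$ to $(\lambda,\ell)$, whereas the observation writes them as going from $(\lambda,\ell)$ to $(\lambda',\ell')$. I will resolve this in one of two equivalent ways. One option is to note that the category with the observation's orientation is the opposite of the literal Grothendieck construction; geometric realization satisfies $|\cC|\simeq|\cC^{\sf op}|$, so \cref{colimandun} still identifies its classifying space with the colimit. The other option is to observe that the described category is the cartesian-fibration (contravariant) unstraightening over $\para$ itself. Either way the description is as stated, and invertibility of the $2$-cells makes the direction of each natural isomorphism immaterial.
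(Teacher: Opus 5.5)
Your proposal is correct and follows the same route as the paper, which gives no separate proof: it presents the observation as an unwinding of the Grothendieck construction for the groupoid-valued functor $\lambda \mapsto \hom(\overline{\lambda}, \cR_{\le n})$, treating unstraightening as the $\infty$-categorical version of that construction. Your explicit handling of the variance is a clarification the paper glosses over, and your second resolution (reading the category as the cartesian unstraightening over the paracyclic category) is the one that matches the paper's later treatment of the projection to the paracyclic category as a Cartesian fibration.
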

    \begin{definition}
    For the sake of brevity for $n \in \NN$ we let
    \[
    \Un_{\le n} := \Un \left( \para^{\sf op} \to {\sf Quiv}^{\sf op} \xrightarrow{\hom_\Cat(\_, \cR_{\le n})} \spaces \right)~.
    \]

    \label{defun<n}
    \end{definition}
    Objects in $\Un_{\le n}$ may be depicted as follows.
    \begin{remark}
        We can depict an object, $(\lambda := \frac{1}{3} \ZZ, \ell :\overline{\frac{1}{3} \ZZ} \to \cR_{\le n})$, in $\Un_{\le n}$ pictorially by first noting $\overline{\frac{1}{3} \ZZ}$ is the free quiver generated by the digraph
        \[\begin{tikzcd}
    	{\frac{1}{3}} \\[-1ex]
    	&[-1ex]&[-1ex] {\frac{2}{3}} \\[-1ex]
    	0
    	\arrow[curve={height=-10pt},"{\frac{1}{3}<\frac{2}{3}}", from=1-1, to=2-3]
    	\arrow[curve={height=-10pt},"{\frac{2}{3}<1}", from=2-3, to=3-1]
    	\arrow[curve={height=-10pt},"{0<\frac{1}{3}}", from=3-1, to=1-1]
        \end{tikzcd} \hspace{3ex}.\]
   Then if $\ell$ takes $0,\frac{1}{3}, $and$ \frac{2}{3}$ to $X, Y, Z \in \Obj(\cR_{\le n})$ and $0<\frac{1}{3}$, $\frac{1}{3}<\frac{2}{3}$, and $\frac{2}{3}<1$ to $f,g,h \in {\sf Mor}(\cR_{\le n})$ $\ell$ can then be depicted as
        
         \[
    \begin{tikzpicture}

    % define a circle (center=O and \radius)
      \coordinate (T) at (0,0);
      \def\radius{1cm}
    
      % draw this circle and its center
      \draw (T) circle[radius=\radius];
    
      % define a random point (A) on this circle
      \path (T) ++(0:\radius) coordinate (0);
      \path (T) ++(120:\radius) coordinate (120);
      \path (T) ++(240:\radius) coordinate (240);
      \path (T) ++(60:\radius) coordinate (60);
      \path (T) ++(180:\radius) coordinate (180);
      \path (T) ++(310:\radius) coordinate (310);
      \path (T) ++(10:\radius) coordinate (10);
      \path (T) ++(130:\radius) coordinate (130);
      \path (T) ++(250:\radius) coordinate (250);
    
      \fill[red] (0) circle[radius=2pt] ++(0:.75em) node {$X$};
      \fill[red] (120) circle[radius=2pt] ++(120:.75em) node {$Y$};
      \fill[red] (240) circle[radius=2pt] ++(240:.75em) node {$Z$};

      \fill[black] (60) ++(60:1em) node {$f$};
      \fill[black] (180) ++(180:1em) node {$g$};
      \fill[black] (310) ++(310:1em) node {$h$};

      \draw[<-] (10) ++(60:.5em) arc[radius=\radius, start angle=10, end angle=110];
      \draw[<-] (130) ++(180:.5em) arc[radius=\radius, start angle=130, end angle=230];
      \draw[<-] (250) ++(300:.5em) arc[radius=\radius, start angle=250, end angle=350]; 
    \end{tikzpicture}
    \label{picture}
    .\]
    The number and position of the points on the circle are determined by the element in $\para$. Since $\ell$ determined the labels of the picture we call it a labeling functor. 
    \label{Unaspic}
    \end{remark}
    By evoking definitions and \cref{colimandun} we see the following.
    \begin{observation}
        \label{Unn=HH}
     There is a canonical equivalence between spaces:
    \begin{align*}
        {\sf HH}(\cR_{\le n})   
        &\hspace{1ex} \stackrel{(\ref{defhhpara})}{\simeq} \colim \left( \para^{\sf op} \to {\sf Quiv}^{\sf op} \xrightarrow{\hom_\Cat(\_, \cR_{\le n})} \spaces \right)
        \stepcounter{equation}\tag{\theequation} \label{line4}\\
          &\stackrel{(\ref{colimNUn})}{\simeq}  \left|\Un \left( \para^{\sf op} \to {\sf Quiv}^{\sf op} \xrightarrow{\hom_\Cat(\_, \cR_{\le n})} \spaces \right)\right| \\ 
        &\hspace{2.5ex}  \overset{Def \ref{defun<n}}{\simeq} |\Un_{\le n}|~.
    \end{align*}
    \end{observation}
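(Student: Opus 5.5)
The plan is to chain together three results already established in the paper. At no point do I intend to use the Reedy structure; the statement holds for any category substituted for $\cR_{\le n}$. The only inputs are the definition of $\hh$, the finality of $\bDelta^{\sf op}\to\para^{\sf op}$, and the identification of colimits with classifying spaces of unstraightenings.

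\emph{Step 1.} Start from \cref{defhh} with $\cC=\cR_{\le n}$. Here $\hh(\cR_{\le n})$ is the colimit over $\bDelta^{\sf op}$ of the functor $[p]\mapsto |\hom_\Cat(\chi_p,\cR_{\le n})|$.

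By \cref{finalfun}, the functor $\bDelta^{\sf op}\to\para^{\sf op}$ of (\ref{Chip}) is final. Moreover, the functor $\chi$ factors through it by construction, since $\chi$ is the composite of (\ref{Chip}) and (\ref{functor}). Restriction along a final functor preserves colimits. Hence the canonical comparison map
\[
\hh(\cR_{\le n})\longrightarrow \colim\left(\para^{\sf op}\to\quiv^{\sf op}\xrightarrow{\hom_\Cat(\_,\cR_{\le n})}\spaces\right)
\]
is an equivalence. This is line (\ref{line4}).

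\emph{Step 2.} Apply \cref{colimandun} to the functor
\[
F:=\left(\para^{\sf op}\to\quiv^{\sf op}\xrightarrow{\hom_\Cat(\_,\cR_{\le n})}\spaces\right).
\]
This gives an equivalence $\colim F\simeq|\Un F|$, and by \cref{defun<n} the right-hand side is exactly $|\Un_{\le n}|$.

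\emph{Main point to check.} The one step needing care is the bookkeeping between $\Gpd$ and $\spaces$. In \cref{defhh} the functor lands in $\Gpd$ and is then postcomposed with $|\_|$, whereas \cref{colimandun} is stated for space-valued functors. Because $\cR_{\le n}$ is an ordinary category, each $\hom_\Cat(\overline\lambda,\cR_{\le n})$ is an ordinary groupoid, namely the maximal subgroupoid of the functor category. Its realization is the corresponding $1$-type.

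So $F$ is the composite of a $\Gpd$-valued functor with the fully faithful inclusion $\Gpd\hookrightarrow\spaces$. For such functors the $\infty$-categorical unstraightening agrees with the classical Grothendieck construction. This is why $\Un_{\le n}$ can be described as the ordinary category of \cref{Un<n}, whose morphisms are a map in $\para$ together with a natural isomorphism of labelings.

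Once this identification is noted, the three equivalences compose to the claimed canonical equivalence $\hh(\cR_{\le n})\simeq|\Un_{\le n}|$.
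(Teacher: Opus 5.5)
Your proposal is correct and is essentially the paper's argument. Finality of $\bDelta^{\sf op}\to\para^{\sf op}$ from \cref{finalfun} gives the first equivalence, \cref{colimandun} gives the second, and \cref{defun<n} is the final identification; the paper's tag $(\ref{colimNUn})$ on the middle step is evidently meant to be \cref{colimandun}, which you correctly use. Your remark on passing from $\Gpd$-valued to space-valued functors, and on identifying the unstraightening with the ordinary Grothendieck construction of \cref{Un<n}, makes explicit what the paper leaves implicit.
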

\subsection{Loopspace of a Groupoid}
We now note a few additional facts, starting with the following theorem.
\begin{prop}
    Let $G$ be a group. There is a canonical equivalence:
    \[
    \map(S^1, \sB G) \simeq \coprod_{[g]\in G_{/\sf conj}}\sB\sZ_G(g)
    \]
    \label{MapS1}
\end{prop}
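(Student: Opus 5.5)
We model $\sB G$ concretely as the one-object groupoid with automorphism group $G$, and $S^1$ as the groupoid $\sB\ZZ$. The proof then has three steps: compute the mapping groupoid $\Fun(\sB\ZZ,\sB G)$, split it into connected components, and identify each component with the classifying space of a centralizer.

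\emph{Step 1: the mapping space is a functor groupoid.} For groupoids, the mapping space $\map(S^1,\sB G)$ is the nerve of the functor groupoid $\Fun(\sB\ZZ,\sB G)$. This holds because $S^1\simeq \sB\ZZ$ and because the nerve $\Gpd\hookrightarrow\spaces$ is fully faithful onto $1$-truncated spaces, compatibly with internal homs. We now describe this functor groupoid.
\begin{itemize}
\item A functor $\sB\ZZ\to\sB G$ is a group homomorphism $\ZZ\to G$. Since $\ZZ$ is free on $1$, such a homomorphism is determined by the image of the generator, so functors correspond exactly to elements $g\in G$.
\item A natural transformation between the functors corresponding to $g$ and $g'$ is a single component $h\in G$ satisfying the naturality square $h g = g' h$, that is, $g'=hgh^{-1}$.
\end{itemize}
Hence $\Fun(\sB\ZZ,\sB G)$ is the action groupoid $G/\!/G$ of $G$ acting on itself by conjugation.

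\emph{Step 2: decompose into components.} Two objects $g,g'$ are connected by a morphism exactly when they are conjugate. So the connected components of $G/\!/G$ are indexed by the conjugacy classes $[g]\in G_{/\sf conj}$.

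\emph{Step 3: identify each component.} A connected groupoid is equivalent to the full subgroupoid on any one of its objects. Choose the representative $g$ of the class $[g]$. Its automorphism group consists of the $h\in G$ with $hgh^{-1}=g$, which is the centralizer $\sZ_G(g)$. So this component is equivalent to $\sB\sZ_G(g)$.

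\emph{Assembly.} A groupoid is equivalent to the disjoint union of its components. Applying the nerve, which preserves coproducts, gives
\[
\map(S^1,\sB G)\simeq \coprod_{[g]\in G_{/\sf conj}}\sB\sZ_G(g).
\]
The equivalence is canonical up to the choice of conjugacy-class representatives; a different choice changes each summand only by an inner automorphism.

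The only step needing care is the first: that the topological mapping space agrees with the nerve of the functor groupoid. I would justify this by citing that $\Gpd$ is equivalent to the full subcategory of $1$-truncated spaces and that $\Fun$ computes the internal hom there. Equivalently, $\map(S^1,X)$ is the free loop space $LX$. For $X=\sB G$ one can check the result directly on homotopy groups: $\pi_0 L\sB G$ is the set of conjugacy classes, via free homotopy classes of loops, and $\pi_1$ of the component of $g$ is $\sZ_G(g)$. Since both sides are $1$-types, these two computations determine the equivalence.
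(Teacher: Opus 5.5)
Your proposal is correct and follows essentially the same route as the paper: the paper identifies $\map(S^1,\sB G)$ with the conjugation action groupoid $G_{/\!/G}$ by specializing its lemma $\hom(\sB H,\sB G)\simeq \hom(H,G)_{/\!/G}$ to $H=\ZZ$. It then splits the action groupoid into components whose automorphism groups are the stabilizers, i.e.\ the centralizers, exactly as in your Steps 1--3, and your added justification that the mapping space agrees with the nerve of the functor groupoid (via $1$-truncated spaces) fills in a point the paper leaves implicit.
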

 Setting $G$ set to be $\Aut_{\cR}(r)$ gives the following corollary since the stabilizers are given by the centralizers of automorphisms $\phi$, denoted $\sZ_{\Aut_{\cR}(r)}(\phi)$. 
\begin{corollary}
    For a Reedy category $\cR$
\[ 
\hom(S^1,\sB \Aut_{\cR}(r)) \simeq \coprod_{[\phi] \in \Aut_{\cR}(r)_{/{\sf conj}}}\sB \sZ_{\Aut_{\cR}(r)}(\phi)~.
\] 
\label{homcent}
\end{corollary}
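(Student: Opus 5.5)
The corollary is the special case $G=\Aut_{\cR}(r)$ of \cref{MapS1}, so the plan is to make that specialization explicit. Along the way I also want to record the groupoid model that produces the decomposition, since that concrete description is what will be used later when identifying pieces of $|\Un_{\le n}|$.

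First, I fix an object $r\in\cR$ and set $G:=\Aut_{\cR}(r)$. This is an ordinary discrete group, because $\cR$ is an ordinary category. Here $\hom(S^1,\sB G)$ means the mapping space in $\spaces$.

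Next, I identify $S^1\simeq \sB\ZZ$, the classifying space of the free group on one generator. Both $\sB\ZZ$ and $\sB G$ are $1$-truncated, and $\Gpd\hookrightarrow\spaces$ is fully faithful on $1$-types. So the mapping space is the groupoid ${\sf Fun}(\sB\ZZ,\sB G)$, and I describe that groupoid directly:
\begin{itemize}
\item an object is a homomorphism $\ZZ\to G$, which is the same as a single element $\phi\in G$ (the image of $1$);
\item a morphism $\phi\to\phi'$ is a natural transformation, that is, an element $h\in G$ with $h\phi h^{-1}=\phi'$.
\end{itemize}
Thus ${\sf Fun}(\sB\ZZ,\sB G)$ is the action groupoid $G_{/\!/G}$ of $G$ acting on itself by conjugation.

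I then decompose this action groupoid into its orbits:
\begin{itemize}
\item the connected components are the conjugacy classes $[\phi]\in G_{/{\sf conj}}$;
\item the automorphism group of the object $\phi$ is its stabilizer, which is the centralizer $\sZ_G(\phi)$.
\end{itemize}
Choosing a representative $\phi$ of each class, every connected groupoid is equivalent to $\sB$ of the automorphism group of any of its objects. This gives
\[
{\sf Fun}(\sB\ZZ,\sB G)\simeq\coprod_{[\phi]\in G_{/{\sf conj}}}\sB\sZ_G(\phi),
\]
which is exactly \cref{MapS1}. Substituting $G=\Aut_{\cR}(r)$ yields the stated equivalence.

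No step should be a genuine obstacle, since the result is a direct instance of \cref{MapS1}. The only point needing care is the identification of the $\infty$-categorical mapping space $\hom(S^1,\sB G)$ with the $1$-categorical functor groupoid. I would justify this by noting that the target is a $1$-type, so mapping into it factors through the $1$-truncation of $S^1$, which is $\sB\ZZ$ itself. Finally, I would observe that the equivalence depends on choosing representatives of conjugacy classes, but not on the Reedy structure.
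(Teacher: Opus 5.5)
Your proposal is correct and follows the paper's route: the paper gets this corollary by setting $G=\Aut_{\cR}(r)$ in \cref{MapS1}. It proves that proposition exactly as you do, via \cref{homHG} with $H=\ZZ$ to identify the mapping space with the conjugation action groupoid, and then \cref{cor} to split it into $\sB$ of centralizers. Your remark that mapping into the $1$-type $\sB G$ is computed by the functor groupoid spells out a step the paper leaves implicit.
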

Before proving this theorem we introduce some more definitions and lemmas.
\begin{definition}
    Let $G \curvearrowright X$ be a group acting on a set. The {\bf action groupoid, $X_{/\! / G}$}, is the groupoid where an object is an element in $X$, and a morphism from $x$ to $x'$ is an element $g\in G$ such that $g \cdot x = x'$.
\end{definition}
\begin{lemma}
    Let $G$ and $H$ be groups. There is the canonical equivalence between groupoids:
    \[
    \hom(\sB H, \sB G) \xleftarrow{\simeq} \hqconj{\hom(H, G)}{G}~.
    \]
    \label{homHG}
\end{lemma}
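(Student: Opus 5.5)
We need to prove Lemma homHG: $\hom(\sB H,\sB G)\xleftarrow{\simeq}\hom(H,G)_{/\!/G}$, where $G$ acts on $\hom(H,G)$ by conjugation.

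We construct the comparison functor explicitly. Both $\sB H$ and $\sB G$ are one-object groupoids, so a functor $F:\sB H\to\sB G$ is determined by its value on morphisms. Sending the unique object to the unique object, this value is exactly a group homomorphism $\rho:H\to G$. This gives a bijection between objects of $\hom(H,G)_{/\!/G}$ and objects of $\hom(\sB H,\sB G)$.

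On morphisms, a natural transformation $\eta:F_\rho\Rightarrow F_{\rho'}$ consists of a single component $g\in G$ at the unique object. Naturality requires $g\,\rho(h)=\rho'(h)\,g$ for all $h\in H$, i.e. $\rho'=g\rho g^{-1}$. This is precisely a morphism $\rho\to\rho'$ in the action groupoid, with $G$ acting by $g\cdot\rho=g\rho(-)g^{-1}$.

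We then check functoriality. Vertical composition of natural transformations is multiplication of components, which matches composition in the action groupoid. Identities correspond to $e\in G$. Hence the assignment $(\rho,g)\mapsto(F_\rho,\eta_g)$ is a functor $\hom(H,G)_{/\!/G}\to\hom(\sB H,\sB G)$.

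This functor is bijective on objects. It is also bijective on each hom-set, since a transformation is exactly an element $g$ satisfying the conjugation equation. It is therefore an isomorphism of groupoids, and in particular an equivalence.

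The only point needing care is the convention: one must choose left conjugation so that the action-groupoid arrow $g\cdot\rho=\rho'$ matches the naturality square. If the naturality square instead yields $\rho'=g^{-1}\rho g$, we replace $g$ by $g^{-1}$, which is still a natural isomorphism of groupoids.

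Since $\hom(\sB H,\sB G)$ is taken in $\Cat$, where these are ordinary 1-groupoids, no higher coherence issues arise. I expect no genuine obstacle here; the lemma is essentially an unwinding of definitions.
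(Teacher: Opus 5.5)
Your proposal is correct and matches the paper's proof: both build the functor $\rho \mapsto F_\rho$, $g \mapsto \eta_g$ and observe that naturality at the single object is exactly the conjugation condition defining morphisms of the action groupoid. This gives full faithfulness, and essential surjectivity is immediate. Your version is a bit more careful than the paper's: you check compatibility with composition and note that the functor is bijective on objects, hence an isomorphism of groupoids.
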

\begin{proof}
    Consider the functor 
    \[
    \hqconj{\hom(H, G)}{G} \to \hom(\sB H, \sB G)
    \]
    given by 
    \[
    f \mapsto \begin{cases}
        * \mapsto * \\
        h \mapsto f(h)
    \end{cases}
    \]
    on objects and on morphisms
    \[
    g \mapsto \eta_g:
    \]
    where $\eta_g$ is the natural transformation 
    \[\begin{tikzcd}
	{\sB H} && {\sB G}
	\arrow[""{name=0, anchor=center, inner sep=0}, "f"', curve={height=12pt}, from=1-1, to=1-3]
	\arrow[""{name=1, anchor=center, inner sep=0}, "h", curve={height=-12pt}, from=1-1, to=1-3]
	\arrow["\eta_g"', Rightarrow, from=0, to=1]
\end{tikzcd}\]
    with $\eta_g(*)=g$. This functor is a categorical equivalence. To see this consider the condition for $\eta_g$ to be a natural transformation: for all $a \in \hom(H, G)$
    \[\begin{tikzcd}
	{f(*)} && {h(*)} \\
	& \circlearrowright \\
	{f(*)} && {h(*)}
	\arrow["g", from=1-1, to=1-3]
	\arrow["{f(a)}"', from=1-1, to=3-1]
	\arrow["{h(a)}", from=1-3, to=3-3]
	\arrow["g"', from=3-1, to=3-3]
\end{tikzcd}\]
namely that $f(a) = x g(a) x^{-1}$ $\forall a \in \hom(H, G) $. In other words, $g = x^{-1} f x$ which is identically the condition for $x$ to be a morphism in $\hqconj{\hom(H, G)}{G} $. Therefore the Hom-sets of the domain and codomain are in bijection, so this functor is fully faithful. Essential surjectivity is clear so they are categorically equivalent. 
\end{proof}
\begin{corollary}
For a group $G$ 
\[
\hom(S^1, \sB G) \simeq \hqconj{G}{G} 
\]
 \label{homotopyquotient}
\end{corollary}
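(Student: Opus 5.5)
This follows from \cref{homHG} applied with $H = \ZZ$. The plan is to identify $S^1$ with $\sB\ZZ$, the groupoid with one object and automorphism group $\ZZ$; this is the standard model for the circle as a space. Then $\hom(S^1,\sB G)\simeq \hom(\sB\ZZ,\sB G)$, and \cref{homHG} gives an equivalence
\[
\hom(\sB\ZZ,\sB G)\xleftarrow{\simeq} \hqconj{\hom(\ZZ,G)}{G}.
\]

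The remaining step is to identify $\hom(\ZZ,G)$ with $G$ equivariantly. Since $\ZZ$ is free on the generator $1$, evaluation at $1$ gives a bijection $\hom(\ZZ,G)\to G$, $f\mapsto f(1)$. We must also check that the conjugation action of $G$ on homomorphisms corresponds to the conjugation action on elements. This holds because $(x f x^{-1})(1) = x f(1) x^{-1}$. An equivariant bijection of $G$-sets induces an isomorphism of action groupoids, so $\hqconj{\hom(\ZZ,G)}{G}\cong \hqconj{G}{G}$. Composing the two equivalences gives the claim.

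The only subtle point is interpretive rather than technical: $\hom(S^1,\sB G)$ must be read as a mapping groupoid (mapping space), not as a set. One also has to accept that replacing the topological circle by $\sB\ZZ$ is legitimate. This is justified because $\sB G$ is a $1$-type, so maps out of $S^1$ factor through its fundamental groupoid $\sB\ZZ$.

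As a consistency check with \cref{MapS1}, we can decompose $\hqconj{G}{G}$ into connected components. The components are the conjugacy classes $[g]$. The automorphism group of the object $g$ is its stabilizer under conjugation, which is $\sZ_G(g)$. Hence $\hqconj{G}{G}\simeq\coprod_{[g]}\sB\sZ_G(g)$, which agrees with \cref{MapS1}.
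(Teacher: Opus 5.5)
Your proposal is correct and follows the paper's proof: identify $S^1 \simeq \sB\ZZ$, apply \cref{homHG} with $H = \ZZ$, and use $\hom(\ZZ,G)\cong G$ via evaluation at the generator. You also check explicitly that this bijection intertwines the two conjugation actions, so it induces an isomorphism of action groupoids; the paper leaves that step implicit.
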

\begin{proof}
    Since $\sB\ZZ \simeq S^1$ and $\hom(\ZZ, G) \simeq G$, an application of \cref{homHG} with $H:= \ZZ$ sees the result.
\end{proof}
The following fact about groupoids is the rest of what we need.
\begin{lemma}
    Given a groupoid $\cG$, there is an equivalence between groupoids:
    \[
    \cG \simeq \coprod_{[x]\in\pi_0\cG}\sB\Aut_{\cG}(x)~.
    \]
    \label{lemma}
\end{lemma}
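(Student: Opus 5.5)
The plan is to write down an explicit functor from the coproduct to $\cG$ and check that it is fully faithful and essentially surjective. Showing an equivalence of groupoids this way avoids building an inverse by hand.

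First I fix the indexing data. Recall that in a groupoid every morphism is invertible, so two objects lie in the same connected component exactly when they are isomorphic. Thus $\pi_0\cG$ is the set of isomorphism classes of objects. For each class $[x]\in\pi_0\cG$ I choose a representative $x$; this is where the axiom of choice enters, and it is why the equivalence is non-canonical. Write $\sB\Aut_{\cG}(x)$ for the one-object groupoid with endomorphisms $\Aut_{\cG}(x)$.

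Next I define the functor
\[
\Phi:\coprod_{[x]\in\pi_0\cG}\sB\Aut_{\cG}(x)\longrightarrow \cG .
\]
On the summand indexed by $[x]$, it sends the unique object $*$ to the chosen representative $x$. It sends a morphism $\phi\in\Aut_{\cG}(x)$ to $\phi$ itself. Functoriality is immediate, since composition in $\sB\Aut_{\cG}(x)$ is defined as composition in $\cG$. A functor out of a coproduct of categories is the same as a family of functors out of the summands, so $\Phi$ is well defined.

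Then I verify the two conditions.
\begin{itemize}
\item \emph{Full faithfulness.} Take two objects in the same summand $[x]$. The hom-set in the source is $\Aut_{\cG}(x)$, and $\Phi$ maps it identically onto $\hom_{\cG}(x,x)=\Aut_{\cG}(x)$. Take two objects in different summands $[x]\neq[x']$. The hom-set in the source is empty, and so is $\hom_{\cG}(x,x')$, because $x$ and $x'$ lie in different isomorphism classes.
\item \emph{Essential surjectivity.} Any object $y\in\cG$ lies in some class $[x]$. Hence $y$ is isomorphic to the chosen representative $x=\Phi(*)$.
\end{itemize}
A fully faithful, essentially surjective functor between groupoids is an equivalence, which proves the claim.

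I expect no real obstacle here. The only subtle point is being careful that the two sides agree on hom-sets between objects in different components, and this rests on the identification of $\pi_0\cG$ with isomorphism classes. If an explicit inverse is wanted, I would choose for each $y$ an isomorphism $\gamma_y:y\to x$ to its representative. I would then send a morphism $f:y\to y'$ in the component of $x$ to $\gamma_{y'}\circ f\circ\gamma_y^{-1}\in\Aut_{\cG}(x)$. The composites with $\Phi$ are identified with the identities by the natural isomorphism formed from the $\gamma_y$.
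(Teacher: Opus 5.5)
Your proposal is correct and follows essentially the paper's proof. You choose a representative of each component, map $\coprod_{[x]}\sB\Aut_{\cG}(x)\to\cG$ by including each automorphism group at its representative, and check full faithfulness and essential surjectivity; the hom-sets across distinct components are empty because $x\simeq y$ iff $[x]=[y]$ in a groupoid. Your explicit inverse built from the chosen isomorphisms $\gamma_y$ is a harmless extra that the paper does not give.
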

\begin{proof}
    Choose for each $[x] \in \pi_0\cG$ a representative element $x \in [x]$. Consider the map
    \begin{equation}
    \coprod_{[x]\in\pi_0\cG}\sB\Aut_{\cG}(x) \to \cG~.
    \label{lemmafunctor}
    \end{equation}
    By the universal property of coproducts it is characterized by the functors that select the canonical action $\Aut_{\cG}(x) \curvearrowright x$ for each $x$:
    \[
    \sB \Aut_{\cG}(x) \xra{\langle \Aut_{\cG}(x) \curvearrowright x\rangle} \cG ~.
    \]
    This map is clearly essentially surjective. Namely, given an object $g$ of $\cG$, for $y \in [g]$ the chosen representative, then $g \simeq y$. 

    Additionally this map is fully faithful; note for the left side of \cref{lemmafunctor}:
    \[
    \hom_{\sf LHS}([x], [y]) = 
    \begin{cases} 
      \Aut_{\cG}(x)  & [x]=[y] \\
      \emptyset & \text{otherwise}
   \end{cases}
    \]
    and the right side
     \[
    \hom_{\sf RHS}(\Aut_{\cG}(x) \curvearrowright x, \Aut_{\cG}(y) \curvearrowright y) = 
    \begin{cases} 
      \Aut_{\cG}(x)  & x \simeq y \\
      \emptyset & \text{otherwise}
   \end{cases} ~.
    \]
    As $\cG$ is a groupoid, $x \simeq y$ if and only if $[x] = [y]$,
    \[
    \hom_{\sf RHS}(\Aut_{\cG}(x) \curvearrowright x, \Aut_{\cG}(y) \curvearrowright y) = \hom_{\sf LHS}([x], [y])~.
    \]
    Therefore this map is an equivalence of categories.
\end{proof}
\begin{observation}
      Let $G \curvearrowright X$ be a group acting on a set, then $\Aut_{X_{/\!/ G}}(x) = G_x$ where $G_x$ is the stabilizer subgroup of $x$ in G.
\end{observation}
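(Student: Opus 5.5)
The plan is simply to unwind the definition of the action groupoid $X_{/\!/G}$ and compare the resulting group with $G_x$. By definition, a morphism $x \to x$ in $X_{/\!/G}$ is an element $g \in G$ with $g \cdot x = x$. The set of such elements is, by definition, the stabilizer $G_x = \{ g \in G : g\cdot x = x\}$. So as a set, $\hom_{X_{/\!/G}}(x,x) = G_x$. Every morphism of a groupoid is invertible, hence $\Aut_{X_{/\!/G}}(x) = \hom_{X_{/\!/G}}(x,x)$.

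Next I check that this bijection respects the group structures. Composition in $X_{/\!/G}$ is multiplication in $G$: given $x \xrightarrow{g} x' \xrightarrow{h} x''$, the composite is $hg$, since $(hg)\cdot x = h\cdot(g\cdot x) = x''$. The identity morphism at $x$ is $e \in G$. Inverses are group inverses, since $g\cdot x = x'$ implies $g^{-1}\cdot x' = x$. Restricted to endomorphisms of $x$, composition is therefore the multiplication of $G_x \subseteq G$, with the same unit and inverses. Hence the identification is an equality of groups, not merely of sets.

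There is no real obstacle here; the only care needed is the convention for composition versus multiplication order, and this is harmless for an endomorphism monoid. For later use, the intended application is the conjugation action $G \curvearrowright G$. There the stabilizer of $g$ is $\{x : xgx^{-1} = g\} = \sZ_G(g)$. Combining \cref{homotopyquotient} with \cref{lemma} and this observation, and noting that $\pi_0(\hqconj{G}{G}) = G_{/\sf conj}$, yields \cref{MapS1}.
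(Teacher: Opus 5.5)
Your proposal is correct and matches the paper, which states this without proof as immediate from the definition: the endomorphisms of $x$ in $X_{/\!/G}$ are exactly the $g\in G$ with $g\cdot x = x$, and composition is multiplication in $G$. Your remark that the conjugation action has centralizers as stabilizers is also how the paper uses this observation, via its corollary on action groupoids, to prove the formula for $\map(S^1,\sB G)$.
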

This observation and $\pi_0\left(X_{/\! / G}\right) = X_{/G}$ leads to the following immediate corollary of \cref{lemma}.
\begin{corollary}
    Given $G$ a group and $X$ such that $G \curvearrowright X$, then 
    \[
    X_{/\! / G} \simeq \coprod_{[x]\in X_{/ G}} \sB G_x ~.
    \] 
    \label{cor}
\end{corollary}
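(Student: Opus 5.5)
The plan is to deduce the statement directly from \cref{lemma} applied to the groupoid $\cG := X_{/\!/ G}$. First I would check that $X_{/\!/ G}$ really is a groupoid. Every morphism $g\colon x \to g\cdot x$ has inverse $g^{-1}\colon g\cdot x \to x$, since $g^{-1}\cdot(g\cdot x) = x$. Composition and identities come from the group multiplication and the unit $e$, and $e\cdot x = x$. So \cref{lemma} applies and gives an equivalence of groupoids
\[
X_{/\!/ G} \simeq \coprod_{[x]\in \pi_0\left(X_{/\!/ G}\right)} \sB \Aut_{X_{/\!/ G}}(x)~,
\]
after choosing a representative $x$ for each component.

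Next I would identify the indexing set. Two objects $x, x'$ lie in the same component exactly when some $g\in G$ satisfies $g\cdot x = x'$, which is the condition that they lie in the same $G$-orbit. Hence $\pi_0\left(X_{/\!/ G}\right) = X_{/G}$, and a choice of representative for each component is the same as a choice of representative for each orbit.

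Then I would identify the automorphism groups, using the preceding observation. An endomorphism of $x$ is an element $g$ with $g\cdot x = x$, so as a set $\Aut_{X_{/\!/ G}}(x) = G_x$. Composition in $X_{/\!/ G}$ is the group multiplication, so this is an equality of groups, and therefore $\sB\Aut_{X_{/\!/ G}}(x) = \sB G_x$. Substituting both identifications into the display above yields the claimed equivalence $X_{/\!/ G} \simeq \coprod_{[x]\in X_{/ G}} \sB G_x$.

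There is no genuine obstacle here. The only point needing care is the independence of choices: the right-hand side depends on chosen orbit representatives. A different representative $x' = g\cdot x$ has stabilizer $G_{x'} = gG_xg^{-1}$, and conjugation by $g$ gives an isomorphism $\sB G_x \cong \sB G_{x'}$, so the equivalence is well defined up to (non-canonical) equivalence. This matches the non-canonical choices already made in \cref{lemma}.
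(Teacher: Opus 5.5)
Your proposal is correct and matches the paper's argument: apply \cref{lemma} to the groupoid $X_{/\!/ G}$, identify $\pi_0\left(X_{/\!/ G}\right)$ with the orbit set $X_{/G}$, and identify $\Aut_{X_{/\!/ G}}(x)$ with the stabilizer $G_x$ using the preceding observation. Your extra checks, that $X_{/\!/ G}$ is a groupoid and that changing orbit representatives only conjugates the stabilizer, are details the paper leaves implicit.
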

Now we return to the proof of the main result.
\begin{proof}[Proof of \cref{MapS1}]
    The proof is an application of \cref{homotopyquotient} and \cref{cor}. The stabilizers are given by the centralizers since the action is by conjugation. Altogether
    \[
    \hom(S^1,\sB G) 
    \stackrel{(\ref{homotopyquotient})}{\simeq} \hqconj{G}{G}
    \stackrel{(\ref{cor})}{\simeq} \coprod_{[g] \in G_{/^{\sf conj} G}}\sB \sZ_{G}(g) ~.
    \]
\end{proof}
\section{Proof of Theorem}    
\test

\begin{proof}
    % Recall :
    % \[
    % \coprod_{r\geq 0} \coprod_{\substack{\sf r \hspace{.5ex} cycle \\ \sf type \hspace{.5ex} \omega}} \sB C_\omega \simeq HH\left(\coprod_{0 \leq r \leq N}\sB \Sigma_r\right)
    % \]
    % So it will suffice to show 
    % \[
    % HH(\Fin) \simeq  HH\left(\coprod_{0 \leq r \leq N}\sB \Sigma_r\right) 
    % \]

  First \cref{Reedyvaluewisecolim}, \cref{Unn=HH}, \cref{HH=fact} and that colimits commute with colimits implies: 
    \[
        {\sf HH}(\cR) \stackrel{\ref{HH=fact}}{\simeq} \int_{S^1} \cR 
        \stackrel{\ref{Reedyvaluewisecolim}}{\simeq} \int_{S^1} \underset{n \to \infty}\colim \cR_{\le n} 
        \label{line2}\\ 
        \simeq \underset{n \to \infty}\colim \int_{S^1}  \cR_{\le n} \quad  \\  
        \stackrel{\ref{Unn=HH}}{\simeq} \underset{n \to \infty}\colim |\Un_{\le n}|~.
    \]
    Therefore we must understand $\underset{n \to \infty}\colim |\Un_{\le n}|$ which is understanding $\Un_{\le n}$ by \cref{valuewisecolim}. 
    We seek out a description of $\Un_{\le n}$ through induction on $n$. Explicitly we aim to show that 
    \[|\Un_{\le n}| \simeq \coprod_{0 \ge r \ge } \coprod_{[r]\in\cR_{= n}} \coprod_{[\phi]\in \Aut_{\cR}(r)_{/^{\sf conj}}} \sB \sZ_{\Aut_{\cR}(r)}(\phi)~.\]
    
    We begin with a base case of $n=0$.
    %$n = \max \{{\sf Card} (\ell(x) | x \in \overline\lambda\}$
     First note that the morphisms in ${\cR_{\leq 0}^-}$ must be isomorphisms since non-isomorphisms lower degree and 0 is the lowest degree. This forces morphisms in $\cR_{\leq 0}^+$ to be isomorphisms since they map between objects of degree 0. Consequently, the Reedy factorization of any morphism $f \in {\cR_{\leq 0}}$ must be a composition of isomorphisms. Then $\cR_{\le 0}$ has only invertible morphisms so letting $[\cR_{\le 0}]^{\sf iso} := \pi_0(\Obj(\cR_{\leq 0}))$ we see by \cref{lemma}:
     \[ \cR_{\le 0} = \coprod_{[r]\in [\cR_{\leq 0}]^{\sf iso}} \sB\Aut_{\cR}(r)~.\]
     Thus, nonabelian Poincare duality (\cite{Ayala_2015} Corollary 4.6), and the connectedness of the circle imply
     \[
     \int_{S^1}\cR_{\le 0} \simeq \hom\left(S^1, \cR_{\le 0}\right)\simeq \hom\left(S^1, \coprod_{[r]\in [\cR_{\leq 0}]^{\sf iso}} \sB\Aut_{\cR}(r)\right) \simeq \coprod_{[r]\in[\cR_{\leq 0}]^{\sf iso}} \hom(S^1, \sB\Aut_{\cR}(r))~.
     \]  
     Finally recalling \cref{homcent} gives
     \[
     \hh(\cR) \simeq \int_{S^1}\cR_{\le 0} \simeq \coprod_{[r]\in [\cR_{\leq 0}]^{\sf iso}} \hom(S^1, \sB\Aut_{\cR}(r)) \simeq
     \coprod_{[r]\in [\cR_{\leq 0}]^{\sf iso}} \coprod_{[\phi]\in \Aut_{\cR}(r)_/{\sf conj}} \sB \sZ{\Aut_{\cR}(r)}(\phi)~,
     \]
     as desired.
   
    Now we're ready to begin the inductive step. First, we show $\Un_{\le n} \simeq \Un_{\le n-1} \amalg \Un_{= n}$. Using the factorization of morphisms in a Reedy category and \cref{paraobjects} we can consider two endofunctors on $\Un_{\leq n}$ 
    \[
    \sf sd:
     {\sf Un}_{\le n} 
     \longrightarrow 
     \Un_{\le n}
     \]
     \[
    \sf F:
     {\sf Un}_{\le n} 
     \longrightarrow 
     \Un_{\le n} 
     \]
     indicated pictorially on objects, as in \cref{Unaspic}, in \cref{3circles}.
    \begin{figure}
      % \documentclass{amsart}
% \usepackage{tikz,tkz-euclide}
% \begin{document}
\begin{tikzpicture}
    \node at (-4, 8) {$(\lambda, \ell)$};
    \node at (-2.5, 8) {=};
    \draw[<-] (-4, 3) to (-4, 7);
    \node at (-3.5, 5) {$T_{(\lambda, \ell)}$};
    %%% TOP CIRCLE %%%
    
    % define a circle (center=O and \radius)
      \coordinate (T) at (1,8);
      \def\radius{1.5cm}
    
      % draw this circle and its center
      \draw (T) circle[radius=\radius];
    
      % define a random point (A) on this circle
      \path (T) ++(0:\radius) coordinate (0);
      \path (T) ++(30:\radius) coordinate (30);
      \path (T) ++(60:\radius) coordinate (60);
      \path (T) ++(90:\radius) coordinate (90);
      \path (T) ++(105:\radius) coordinate (105);
      \path (T) ++(75:\radius) coordinate (75);
      \path (T) ++(45:\radius) coordinate (45);
      \path (T) ++(15:\radius) coordinate (15);
      \path (T) ++(5:\radius) coordinate (5);
      \path (T) ++(35:\radius) coordinate (35);
      \path (T) ++(65:\radius) coordinate (65);
      \path (T) ++(335:\radius) coordinate (335);
      \path (T) ++(345:\radius) coordinate (345);
      \path (T) ++(95:\radius) coordinate (95);
      \path (T) ++(225:\radius) coordinate (225);
      \path (T) ++(245:\radius) coordinate (245);
      \path (T) ++(205:\radius) coordinate (205);
      \path (T) ++(330:\radius) coordinate (330);
      \path (T) ++(120:\radius) coordinate (120);
      \path (T) ++(275:\radius) coordinate (275);
      \path (T) ++(185:\radius) coordinate (185);
      \path (T) ++(300:\radius) coordinate (300);
    
      % draw (A) with a label
      \fill[red] (30) circle[radius=2pt] ++(30:1em) node {$X_0$};
      \fill[red] (90) circle[radius=2pt] ++(90:1em) node {$X_L$};
      \fill[red] (330) circle[radius=2pt] ++(330:1em) node {$X_1$};
      
      \fill[black] (65) ++(65:2em) node {$f_L$};
      \fill[black] (0) ++(0:2em) node {$f_0$};
      \fill[black] (120) ++(120:2em) node {$f_{L-1}$};
      \fill[black] (300) ++(300:2em) node {$f_1$};
    
      \fill[black] (225) ++(225:2em) circle[radius=1pt] node {};
      \fill[black] (185) ++(185:2em) circle[radius=1pt] node {};
      \fill[black] (205) ++(205:2em) circle[radius=1pt] node {};
    
      \draw[<-] (335) ++(365:1em) arc[radius=\radius, start angle=335, end angle=385];
      \draw[<-] (35) ++(65:1em) arc[radius=\radius, start angle=35, end angle=85];
      \draw[<-] (95) ++(125:1em) arc[radius=\radius, start angle=95, end angle=145];
      \draw[<-] (275) ++(305:1em) arc[radius=\radius, start angle=275, end angle=325];
    
      \draw[<-] (1, 4.5) to (1, 5.5);
      \node at (1.5, 5) {$T_{(\lambda, \ell)}$};
    %%%% MIDDLE CIRCLE %%%%

    \node at (-4, 2) {$\sf sd (\lambda, \ell)$};
    \node at (-2.5, 2) {=};
    
      % define a circle (center=O and \radius)
      \coordinate (O) at (1,2);
      \def\radius{1.5cm}
    
      % draw this circle and its center
      \draw (O) circle[radius=\radius];
    
      % define a random point (A) on this circle
      \path (O) ++(0:\radius) coordinate (0);
      \path (O) ++(30:\radius) coordinate (30);
      \path (O) ++(60:\radius) coordinate (60);
      \path (O) ++(90:\radius) coordinate (90);
      \path (O) ++(105:\radius) coordinate (105);
      \path (O) ++(75:\radius) coordinate (75);
      \path (O) ++(45:\radius) coordinate (45);
      \path (O) ++(15:\radius) coordinate (15);
      \path (O) ++(5:\radius) coordinate (5);
      \path (O) ++(35:\radius) coordinate (35);
      \path (O) ++(65:\radius) coordinate (65);
      \path (O) ++(335:\radius) coordinate (335);
      \path (O) ++(345:\radius) coordinate (345);
      \path (O) ++(95:\radius) coordinate (95);
      \path (O) ++(225:\radius) coordinate (225);
      \path (O) ++(245:\radius) coordinate (245);
      \path (O) ++(205:\radius) coordinate (205);
      \path (O) ++(330:\radius) coordinate (330);
      \path (O) ++(305:\radius) coordinate (305);
      \path (O) ++(315:\radius) coordinate (315);
    
      % draw (A) with a label
      \fill[red] (0) circle[radius=2pt] ++(0:2em) node {$\im(f_0)$};
      \fill[red] (30) circle[radius=2pt] ++(30:1em) node {$X_0$};
      \fill[red] (60) circle[radius=2pt] ++(30:2em) node {$\im(f_L)$};
      \fill[red] (90) circle[radius=2pt] ++(90:1em) node {$X_L$};
      \fill[red] (330) circle[radius=2pt] ++(330:1em) node {$X_1$};
      
      \fill[black] (15) ++(15:2em) node {$f^-_0$};
      \fill[black] (45) ++(20:2em) node {$f^+_0$};
      \fill[black] (75) ++(75:2em) node {$f^-_L$};
      \fill[black] (345) ++(345:2em) node {$f^+_1$};
      \fill[black] (105) ++(105:2em) node {$f^+_L$};
      \fill[black] (315) ++(315:2em) node {$f^-_1$};
    
      \fill[black] (225) ++(225:2em) circle[radius=1pt] node {};
      \fill[black] (245) ++(245:2em) circle[radius=1pt] node {};
      \fill[black] (205) ++(205:2em) circle[radius=1pt] node {};
    
      \draw[<-] (335) ++(345:1em) arc[radius=\radius, start angle=335, end angle=355];
      \draw[<-] (5) ++(5:1em) arc[radius=\radius, start angle=5, end angle=25];
      \draw[<-] (35) ++(45:1em) arc[radius=\radius, start angle=35, end angle=55];
      \draw[<-] (65) ++(65:1em) arc[radius=\radius, start angle=65, end angle=85];
      \draw[<-] (95) ++(105:1em) arc[radius=\radius, start angle=95, end angle=115];
      \draw[<-] (305) ++(315:1em) arc[radius=\radius, start angle=305, end angle=325];

      \draw[<-] (1, -0.5) to (1, -1.5);
      \node at (1.5, -1) {$S_{(\lambda, \ell)}$};
    %%%%%%%% BOTTOM CIRCLE %%%%%%%%%%%

    \node at (-4, -4) {$F (\lambda, \ell)$};
    \node at (-2.5, -4) {=};
    \draw[<-] (-4, 1) to (-4, -3);
    \node at (-3.5, -1) {$S_{(\lambda,\ell)}$};

        % define a circle (center=O and \radius)
      \coordinate (B) at (1,-4);
      \def\radius{1.5cm}
    
      % draw this circle and its center
      \draw (B) circle[radius=\radius];
    
      % define a random point (A) on this circle
      \path (B) ++(0:\radius) coordinate (0);
      \path (B) ++(30:\radius) coordinate (30);
      \path (B) ++(60:\radius) coordinate (60);
      \path (B) ++(90:\radius) coordinate (90);
      \path (B) ++(105:\radius) coordinate (105);
      \path (B) ++(75:\radius) coordinate (75);
      \path (B) ++(45:\radius) coordinate (45);
      \path (B) ++(15:\radius) coordinate (15);
      \path (B) ++(5:\radius) coordinate (5);
      \path (B) ++(35:\radius) coordinate (35);
      \path (B) ++(65:\radius) coordinate (65);
      \path (B) ++(335:\radius) coordinate (335);
      \path (B) ++(345:\radius) coordinate (345);
      \path (B) ++(95:\radius) coordinate (95);
      \path (B) ++(225:\radius) coordinate (225);
      \path (B) ++(245:\radius) coordinate (245);
      \path (B) ++(205:\radius) coordinate (205);
      \path (B) ++(330:\radius) coordinate (330);
      \path (B) ++(120:\radius) coordinate (120);
      \path (B) ++(275:\radius) coordinate (275);
      \path (B) ++(185:\radius) coordinate (185);
      \path (B) ++(300:\radius) coordinate (300);
      \path (B) ++(355:\radius) coordinate (355);
    
      % draw (A) with a label
      \fill[red] (0) circle[radius=2pt] ++(0:2em) node {$\im(f_0)$};
      \fill[red] (60) circle[radius=2pt] ++(30:2em) node {$\im(f_L)$};
      %\fill[red] (330) circle[radius=2pt] ++(330:1em) node {$X_2$};
      
      \fill[black] (30) ++(15:3em) node {$f^+_0 \circ f^-_0 $};
      \fill[black] (90) ++(90:2em) node {$f^+_L \circ f^-_L $};
      \fill[black] (330) ++(345:3em) node {$f^+_1 \circ f^-_1 $};
    
      \fill[black] (225) ++(225:2em) circle[radius=1pt] node {};
      \fill[black] (185) ++(185:2em) circle[radius=1pt] node {};
      \fill[black] (205) ++(205:2em) circle[radius=1pt] node {};
    
      \draw[<-] (5) ++(35:1em) arc[radius=\radius, start angle=5, end angle=55];
      \draw[<-] (65) ++(95:1em) arc[radius=\radius, start angle=65, end angle=115];
      \draw[->] (355) ++(325:1em) arc[radius=\radius, start angle=355, end angle=305];

      \draw[<-] (1, 4.5) to (1, 5.5);
  \end{tikzpicture}

% \end{document}
      \caption{Pictorial definition on objects of two functors and two natural inclusions}
      \label{3circles}
    \end{figure} 
    
    We now give an explicit construction of these functors. We begin by recalling \cref{paraobjects}: that the full subcategory of $\para$ consisting of objects of the form $\frac{1}{L} \ZZ$ is a skeleton. Thus any functor defined on $\Un_{\le n}$ need only be specified on elements of the form $\left(\frac{1}{L} \ZZ, \overline{\frac{1}{L} \ZZ} \xra{\ell}\cR_{\le n}\right)$. Recall $\overline{\frac{1}{L} \ZZ}$ is the quiver with objects $\{\frac{k}{L}+\ZZ\}_{0\le k < L}$ and morphisms freely generated by  $\frac{k}{L}+ \ZZ \to \frac{k+1}{L}+ \ZZ$ for each $0\le k < L$. Then $\ell$ is specified by for each $k \in \ZZ_{/L\ZZ}$ an object of $\cR$ with degree less than n, say $X_k := \ell(\frac{k}{L}+\ZZ)$, and a morphism $ f_k:X_k \to X_{k+1}$. Now to define $\sf sd$:
    \[
    \sd\left(\frac{k}{L} \ZZ, \ell\right) 
    := \left(\frac{k}{2L} \ZZ , \overline{\frac{k}{2L}\ZZ}\xra{\ell_{\sd}}\cR_{\le n}\right)
    \]
    where $\ell_{\sd}$ is defined on objects by for $p \in \ZZ_{/2L \ZZ} $
    \[
    \ell_{\sd}\left(\frac{p}{2L}+\mathbb{Z}\right) := 
    \begin{cases} 
      X_{p/2} & \text{p even} \\
      {\sf im}\left(f_{\frac{p-1}{2}}\right) & \text{p odd} ~,
   \end{cases}
    \]
    and, with a slight abuse of notation, on morphisms by
    \[
    \ell_{\sd}\left(\frac{p}{2L}+ \ZZ \to \frac{p+1}{2L} +\ZZ\right) := 
    \begin{cases} 
      f^-_{p/2}: X_{p/2} \to {\sf im}(f_{p/2}) & \text{p even} \\
      f^+_{\frac{p+1}{2}}: {\sf im}\left(f_{\frac{p-1}{2}}\right) \to X_{\frac{p+1}{2}} & \text{p odd} ~,
   \end{cases}
    \]
   where the indices are taken modulo $L$. 
   % Putting this all together we could write:
   %   \[
   %   \left(\frac{1}{L}\mathbb{Z} , \LARGE{\substack{\frac{m}{L}+\mathbb{Z} \mapsto X_m \\ \left(\frac{m}{L}+\mathbb{Z} \to \frac{m+1}{L}+\mathbb{Z}\right) \mapsto f_m}} \right)
   %   \xmapsto{\sd}
   %   \left(\frac{1}{2L}\mathbb{Z} , \LARGE{\substack{\normalsize{\frac{k}{2L}+\mathbb{Z} \mapsto}
   %   \begin{cases} 
   %     X_{k/2} & \text{k even} \\
   %     {\sf im}(f_{\frac{k-1}{2}}) & \text{k odd}
   % \end{cases}
   %   \\
   %   \normalsize{\left(\frac{k}{2L}+\mathbb{Z} \to \frac{k+1}{2L}+\mathbb{Z}\right) \mapsto}
   %   \begin{cases} 
   %    f^-_k & \text{k even} \\
   %    f^+_k & \text{k odd}
   % \end{cases}
   %    }} \right)
   %   ~.
   %   \]
     Similarly we can define a map $F$ by 
     \[
    F\left(\frac{k}{L}\ZZ , \ell\right) 
    := \left(\frac{k}{L}\ZZ +\frac{1}{2L} 
    , \overline{\frac{2k+1}{2L}\ZZ}\xra{\ell_F}\cR_{\le n}\right)
    \]
    where $\ell_{F}$ is defined on objects by
    \[
    \ell_{F}\left(\frac{k}{L}+\mathbb{Z}\right) := \sf{im}(f_k)~,
    \]
    and on morphisms by
    \[
    \ell_{F}\left( \frac{k}{L}+ \ZZ \to \frac{k+1}{L} +\ZZ\right) 
    := f^+_{k+1} \circ f^-_{k+1}   ~. 
    \]
    % As before, putting this altogether we see:
    %   \[
    %  \left(\frac{1}{L}\mathbb{Z} , \LARGE{\substack{\frac{m}{L}+\mathbb{Z} \mapsto X_m \\ \left(\frac{m}{L}+\mathbb{Z} \to \frac{m+1}{L}+\mathbb{Z}\right) \mapsto f_m}} \right)
    %  \xmapsto{F}
    %  \left(\frac{1}{L}\mathbb{Z}+\frac{1}{2L} , \LARGE{\substack{\frac{k}{L}+\mathbb{Z} \mapsto \sf{im}(f_k) \\ \left(\frac{k}{L}+\mathbb{Z} \to \frac{k+1}{L}+\mathbb{Z}\right) \mapsto f^+_k \circ f^-_k}} \right)
    %  ~.
    %  \]

    We now describe where these functors take morphisms. Towards this, consider $\lambda, \lambda' \in \para$ where $\lambda$ can be represented by $\frac{1}{L}\ZZ$ and $\lambda'$ by $\frac{1}{M}\ZZ$. Additionally define $\ell: \overline{\lambda} \to \cR_{\le n}$ and $\ell': \overline{\lambda'} \to \cR_{\le n}$ on both objects and morphisms by:
    \begin{align*}
        \ell_{\sf obj}:\frac{k}{L}+\ZZ &\mapsto X_k \\
        \ell_{\sf mor}:\left(\frac{k}{L}+\ZZ \to \frac{k+1}{L} + \ZZ\right) &\mapsto \left[f_k: X_k \to X_{k+1} \right]     
    \end{align*}
    and 
    \begin{align*}
        \ell'_{\sf obj}:\frac{j}{M}+\ZZ &\mapsto Y_j \\
        \ell'_{\sf mor}:\left(\frac{j}{M}+\ZZ \to \frac{j+1}{M}+\ZZ\right) &\mapsto \left[h_j: Y_j \to Y_{j+1}\right]     
    \end{align*}
    respectively. 
    For all morphisms $g:\lambda\to \lambda'$ in $\para$ the functor $\sd$ is defined as follows:
    \begin{equation}
    \begin{tikzcd}
	{\overline\lambda} &[-1ex]&[-1ex] {\overline{\lambda'}} &&& {\overline{\sd(\lambda)}} &[-2ex]&[-2ex] {\overline{\sd(\lambda')}} \\[-1em]
	& {\hspace{2ex} \rotatebox[origin=c]{30}{$\implies$} \eta \hspace{2ex}} & {} &&& {} & {\rotatebox[origin=c]{30}{$\implies$}  { \sf sd}(\eta)} \\
	& \cR &&&&& \cR
	\arrow["{{{\overline g}}}", from=1-1, to=1-3]
	\arrow["\ell"', from=1-1, to=3-2]
	\arrow["{{{\ell'}}}", from=1-3, to=3-2]
	\arrow["{\overline{\sd(g)}}", from=1-6, to=1-8]
	\arrow["{\ell_{\sd}}"', from=1-6, to=3-7]
	\arrow["{\ell'_{\sd}}", from=1-8, to=3-7]
	\arrow["\sd", maps to, from=2-3, to=2-6] 
\end{tikzcd}.
\label{triangle}
\end{equation}
    Here, $\sd(g)$ is defined via:
    \[
    \sd (g) : 
    \normalsize{\frac{k}{2L}  \mapsto}
     \begin{cases} 
      g(\frac{k/2}{L})  & \text{k even} \\
      g(\frac{(k-1)/2}{L}) + \frac{1}{2M} & \text{k odd}
   \end{cases}~.
    \]
    Note ${\sf sd}(g)$ does take values in $\overline{\sd(\lambda')} \simeq \frac{1}{2M}\ZZ$ as desired. For ease of notation, we write $g(k) := p$ instead of $g\left(\frac{k}{L}\right) = \frac{p}{M}$. 
    
    The natural transformation, ${\sf sd}(\eta)$, extends $\eta$. Namely $\eta$ necessitates the existence of the following commutative diagram for all $k$ and for all morphisms $q$ in $\para$: 
    \[\begin{tikzcd}
	{\ell(\frac{k}{L}+\ZZ)} && {\ell(\frac{k+1}{L}+\ZZ)} && {X_k} && {X_{k+1}} \\
	& \circlearrowleft && {=} && \circlearrowleft \\
	{\ell'\circ\overline g(\frac{k}{L}+\ZZ)} && {\ell'\circ\overline g(\frac{k+1}{L}+\ZZ)} && {Y_{g(k)}} && {Y_{g(k+1)}}
	\arrow["{{\ell(q)}}", from=1-1, to=1-3]
	\arrow["{{\eta_k}}"', from=1-1, to=3-1]
	\arrow["{{\eta_{k+1}}}", from=1-3, to=3-3]
	\arrow["{\ell(q)}", from=1-5, to=1-7]
	\arrow["{{\eta_k}}"', from=1-5, to=3-5]
	\arrow["{{\eta_{k+1}}}", from=1-7, to=3-7]
	\arrow["{{\ell'\circ\overline g(q)}}"', from=3-1, to=3-3]
	\arrow["{{\ell'\circ\overline g(q)}}"', from=3-5, to=3-7]
\end{tikzcd}\hspace{2ex} .\]
    Then for ${\sf sd}(\eta)$ to be a natural transformation in \cref{triangle}, we would need

    \[\begin{tikzcd}
	{\ell_{\sd}(\frac{k}{2L}+\ZZ)} && {\ell_{\sd}(\frac{k+1}{2L}+\ZZ)} \\
	& \circlearrowleft \\
	{[\ell'_{\sd}\circ\overline{\sd(g)}](\frac{k}{2L}+\ZZ)} && {[\ell'_{\sd}\circ\overline{\sd(g)}](\frac{k+1}{2L}+\ZZ)}
	\arrow["{{\ell_{\sd}(h)}}", from=1-1, to=1-3]
	\arrow["{{\sd}({\eta_k})}"', from=1-1, to=3-1]
	\arrow["{{\sd}({\eta_{k+1}})}", from=1-3, to=3-3]
	\arrow["{{\ell'_{\sd}\circ\overline{\sd(g(h))}}}"', from=3-1, to=3-3]
    \end{tikzcd}~,\]
    to commute for all k. This equates to, for all $k$ even, a factorization: 
    \begin{equation}
    \begin{tikzcd}
	{X_{k/2}} && {{\sf im}(f_{k/2})} \\
	& \circlearrowleft \\
	{Y_{g(k/2)}} && {{\sf im}(h_{g(k/2)})}
	\arrow[two heads,"{}", from=1-1, to=1-3]
	\arrow["{\eta_{k/2}}"', from=1-1, to=3-1]
	\arrow["{}", dashed, from=1-3, to=3-3]
	\arrow[two heads,"{}"', from=3-1, to=3-3]
    \end{tikzcd}~;
    \label{square1}
    \end{equation}
    and, for every $k$ odd, a factorization:
    \begin{equation}
    \begin{tikzcd}
	{{\sf im}(f_{(k-1)/2})} && {X_{(k+1)/2}} \\
	& \circlearrowleft \\
	{{\sf im}(h_{g((k-1)/2)})} && {Y_{g((k+1)/2)}}
	\arrow[hook,"{}", from=1-1, to=1-3]
	\arrow["{}"', dashed, from=1-1, to=3-1]
	\arrow["{\eta_{(k+1)/2}}", from=1-3, to=3-3]
	\arrow[hook,"{}"', from=3-1, to=3-3]
    \end{tikzcd}~.
    \label{square2} 
    \end{equation}
    All these squares, together, form the diagram shape from \cref{imcommute}:
    \[\begin{tikzcd}
	{X_{k/2}} && {X_{(k/2)+1}} \\
	& {{\sf im}(f_{k/2})} \\
	{Y_{g(k/2)}} && {Y_{g((k/2)+1)}} \\
	& {{\sf im}(h_{g(k/2)})}
	\arrow["{\ell(q)}", from=1-1, to=1-3]
	\arrow[two heads, from=1-1, to=2-2]
	\arrow["{\eta_k}", from=1-1, to=3-1]
	\arrow["{{\eta_{k+1}}}", from=1-3, to=3-3]
	\arrow[hook, from=2-2, to=1-3]
	\arrow[dashed, from=2-2, to=4-2]
	\arrow["{[{\ell'\circ\overline{g}](q)}}"{description, pos=0.7}, from=3-1, to=3-3]
	\arrow[two heads, from=3-1, to=4-2]
	\arrow[hook, from=4-2, to=3-3]
\end{tikzcd}\hspace{2ex} .\]
    Therefore, by \cref{imcommute} the necessary map exists and there are factorizations as in \cref{square1} and \cref{square2}. Then ${\sf sd}(\eta)$ is indeed a natural transformation in \cref{triangle}. So $\sd$ is a functor. 
    
    This process can be repeated for $F$, in fact it is significantly simpler:
    \[
    F (g) : 
    \frac{2k+1}{2L}  \mapsto
      g\left(\frac{k}{L}\right)+ \frac{1}{2M}~.
    \]

    Finally, we look to define the natural inclusions as natural transformations. These can be defined abstractly by recalling $\Un_{\le n} \to \para$ is a Cartesian fibration, so there exists final fillers

\[\begin{tikzcd}
	{*} &&& {\Un_{\le n}} &&&& {*} &&& {\Un_{\le n}} \\
	\\
	{[1]} &&& \para &&&& {[1]} &&& \para
	\arrow["{{\langle \sf sd(\lambda, \ell)\rangle}}", from=1-1, to=1-4]
	\arrow["{{\langle 1 \rangle }}"', from=1-1, to=3-1]
	\arrow["{\sf fgt}", from=1-4, to=3-4]
	\arrow["{{\langle {\sf sd}(\lambda, \ell)\rangle }}", from=1-8, to=1-11]
	\arrow["{{\langle 1 \rangle }}"', from=1-8, to=3-8]
	\arrow["{\sf fgt}", from=1-11, to=3-11]
	\arrow[dashed, from=3-1, to=1-4]
	\arrow["{{\langle \lambda \to \sf sd(\lambda) \rangle}}"', from=3-1, to=3-4]
	\arrow[dashed, from=3-8, to=1-11]
	\arrow["{{\langle F(\lambda) \to {\sf sd}(\lambda) \rangle}}"', from=3-8, to=3-11]
\end{tikzcd} .\]
    Denote these fillers by $\langle T \rangle$ and respectively $\langle S \rangle$. More explicitly T is:
   
    \[
    T : \id \to \sd \\
    \]
    on objects and
    \[
    T_{\left(\frac{1}{L}\ZZ, \ell\right)} : 
    \left(\frac{1}{L}\ZZ, \ell\right) \to
    \sd\left(\frac{1}{L} \ZZ, \ell\right) ~.
    \]
    on morphisms.
    This is a morphism in $\Un_{\le n}$ and therefore can be specified by a morphism in $\para$ and a communtative triangle. Then $T$ is given by
    \begin{align*}
    T &:  \frac{1}{L}\ZZ \to \frac{1}{2L}\ZZ \\
    T &:  \frac{k}{L}   \mapsto  \frac{2k}{2L} ,
    \end{align*}
    and 
    \[\begin{tikzcd}
	{\overline\lambda} &[-1em]&[-1em] {\overline{\sd(\lambda)}} \\[-1em]
	& \rotatebox[origin=c]{30}{$\circlearrowleft$}  \\
	& \cR
	\arrow["{\overline T}", from=1-1, to=1-3]
	\arrow["\ell"', from=1-1, to=3-2]
	\arrow["\ell_{\sd}", from=1-3, to=3-2]
    \end{tikzcd}~.\]
    We define $S \in \Un_{\le n}$ similiarly:
    \begin{align*}
    S &: F \to \sd \\
    S_{\left(\frac{1}{L} \ZZ, \ell\right)} &:
     F\left(\frac{1}{L} \ZZ, \ell\right) 
     \to  \sd\left(\frac{1}{L} \ZZ, \ell\right)
    \end{align*}
    with the morphism 
    \begin{align*}
    S &:  \frac{1}{L}\ZZ \to \frac{1}{2L}\ZZ \\
    S &:  \frac{k}{L}   \mapsto  \frac{2k+1}{2L} ,
    \end{align*}
    and the communative triangle 
    \[\begin{tikzcd}
	{\overline{F(\lambda)}} &[-1em]&[-1em] {\overline{\sd(\lambda)}} \\[-1em]
	& \rotatebox[origin=c]{30}{$\circlearrowleft$}  \\
	& \cR
	\arrow["{\overline S}", from=1-1, to=1-3]
	\arrow["\ell_F"', from=1-1, to=3-2]
	\arrow["{\ell_{\sd}}", from=1-3, to=3-2]
    \end{tikzcd}~.\]
     Together these natural transformations establish the diagram 
    \[\begin{tikzcd}
	{\Un_{\le n}} && \begin{array}{c} {\substack{\hspace{2ex} \rotatebox[origin=c]{270}{$\Rightarrow$} \hspace{.75ex} T \\ \\ \\ \hspace{2ex} \rotatebox[origin=c]{90}{$\Rightarrow$} \hspace{.75ex}S}} \end{array} && {\Un_{\le n}} 
	\arrow["{F}"{description}, curve={height=24pt}, from=1-1, to=1-5]
	\arrow["{\id}"{description}, curve={height=-24pt}, from=1-1, to=1-5]
	\arrow["{\sd}"{description}, from=1-1, to=1-5]
    \end{tikzcd}~.\]

    Taking $\infty$-groupiod completions, also called geometric realizations, determines a (homotopy) commutative diagram among spaces: 
    \[\begin{tikzcd}
	{|\Un_{\le n}|} && \begin{array}{c} {\substack{\hspace{5ex} \rotatebox[origin=c]{270}{$\Rightarrow$} \text{ (htpy)} \\ \\ \\ \hspace{5ex} \rotatebox[origin=c]{90}{$\Rightarrow$} \text{ (htpy)}}} \end{array} && {|\Un_{\le n}|}
	\arrow["{|F|}"{description}, curve={height=24pt}, from=1-1, to=1-5]
	\arrow["{|\id|}"{description}, curve={height=-24pt}, from=1-1, to=1-5]
	\arrow["{|\sd|}"{description}, from=1-1, to=1-5]
    \end{tikzcd}~.\]
    Then $|F|$ and $|\sd|$ are homotopic to the identity and so is $|F^r|$ for $r \in \ZZ_+$.  We have now reduced showing $|\Un_{\le n}| \simeq |\Un_{\le n-1}|\amalg |U|$ for some $U$, to $|F^r(\Un_{\le n})| \simeq |\Un_{\le n-1}|\amalg |U|$
    
    Before we can show $F^r$ does take values in the desired disjoint union, we consider the full subcategory $(\para^{\op})_{\leq N} \subset \para^{\op}$ with elements isomorphic to $\frac{1}{L}\ZZ$ where $L \le N$, i.e. $\lambda$ such that the quiver $\ov{\lambda}$ has at most $N$ objects.
    Then $\underset{N \to \infty}\colim (\para^{\op})_{\leq N} = \para^{\op}$ so 
    \begin{align*}
        \Un_{\le n} 
        &\simeq  \Un \left( \underset{N \to \infty}\colim[(\para^{\op})_{\leq N}] \to {\sf Quiv}^{\sf op} \xrightarrow{\hom_\Cat(\_, \cR_{\le n})} \spaces \right) \\
        &\simeq \underset{N \to \infty}\colim \left[ \Un \left( (\para^{\op})_{\leq N} \to {\sf Quiv}^{\sf op} \xrightarrow{\hom_\Cat(\_, \cR_{\le n})} \spaces \right) \right]  \stepcounter{equation}\tag{\theequation} \label{unline2} \\
    \end{align*}  
    where \cref{unline2} is due to Cartesian fibrations being exponentiable fibrations as shown in Lemma 2.15 in \cite{ayala2020fibrationsinftycategories}. We will use the notation
    \[
     \Un_{\le n, \le N} :=  \left[ \Un \left( (\para^{\op})_{\leq N} \to {\sf Quiv}^{\sf op} \xrightarrow{\hom_\Cat(\_, \cR_{\le n})} \spaces \right) \right] ~.
        \stepcounter{equation}\tag{\theequation} \label{colimNUn}
    \]
    
    Now we induct on N to show $F^k$ takes values in $ \Un \left( \para^{\sf op} \to {\sf Quiv}^{\sf op} \xrightarrow{\hom_\Cat(\_, \cR_{=n})} \spaces \right)  \amalg \Un_{\le n-1}$ for some large enough $k$. For the sake of readability we denote
    \[
    \Un_{=n}:= \Un \left( \para^{\sf op} \to {\sf Quiv}^{\sf op} \xrightarrow{\hom_\Cat(\_, \sB \cR_{=n})} \spaces \right)~.
    \]
    Note that for $(\lambda, \ell) \in \Un_{=n}$, then $\ell$ either takes objects to objects of degree strictly less than $n$ or morphisms to isomorphisms
    
    First the base case: let $(\lambda, \ell) \in \Un_{\le n, \le 1}$. The unique vertex is labelled as an object with degree $n$ or with degree less than $n$. If the degree is less then $n$, than $(\lambda, \ell) \in \Un_{\le n-1}$ so we are done. If the degree is equal to $n$, either the labeled morphism is an isomorphism, in which case  $(\lambda, \ell) \in  \Un_{=n}$, or the labelled morphism is not an isomorphism in which case the degree of the image of the morphism is less than $n$ so $F(\lambda, \ell)\in \Un_{\le n-1}$.

    % {\color{magenta}
    % introduce the full subcategory $(\para^{\op})_{\leq N} \subset \para^{\op}$ consisting of those $\lambda$ such that the quiver $\Obj(\ov{\lambda}))$ has at most $N$ objects/vertices.  

    % use this to define a full subcategory ${\sf Un}_{\leq n, \leq N} \subset {\sf Un}_{\leq n}$.

    % observe that $\colim_N (\para^{\op})_{\leq N} = \para^{\op}$.
    % conclude that $\colim_N {\sf Un}_{\leq n, \leq N} = {\sf Un}_{\leq n}$.
    % use that $|-|$ commutes with filtered colimits.

    % then, in this induction step (on $n$), use induction on $N$.
    % For this, your argument below is valid.
    % }
        Now we proceed to the induction step. Let $(\lambda, \ell) \in \Un_{\le n, \le N}$. Let $L \in \NN$ be such that $\lambda \simeq \frac{1}{L}\ZZ \in \para$. Denote $X_n := \ell\left(\frac{n}{L}+\ZZ\right)$ and $f_n := \ell\left(\frac{n}{L}+\ZZ \to \frac{n+1}{L}+\ZZ \right)$. Note, if $L < N$ we're done by induction and if $L=1$ we are done by the base case, so suppose $L=N>1$. 
        Now suppose there exists $i$ such that $|{\sf im}(f_i)| \le n$. Without loss of generality let $i=1$. Then both $\deg( {\sf im}(f^+_1\circ  f^-_1))$ and $\deg({\sf im}(f^+_2\circ f^-_2))$ must be less than $n$, by \cref{factor}. This application of \cref{factor} is depicted by \cref{smallfig}. Each application of $F$ causes one additional vertex to be mapped to an object with degree less than $n$. This process can be repeated until all of the vertices are mapped to objects with degree less than $n$. Explicitly, 
    \begin{figure}[h]   
         \[
    \begin{tikzpicture}

    % define a circle (center=O and \radius)
      \coordinate (T) at (-3.5,0);
      \coordinate (B) at (3.5,0);
      \def\radius{1cm}
    
      % draw this circle and its center
      \draw (T) circle[radius=\radius];
      \draw (B) circle[radius=\radius];

      % Arrows
      \draw[|->] (-1.5,0) to (1.5,0);
      \node at (0, .25) {\sf sd};
    
      % define a random point (A) on this circle
      \path (T) ++(0:\radius) coordinate (0);
      \path (T) ++(120:\radius) coordinate (120);
      \path (T) ++(240:\radius) coordinate (240);
      \path (T) ++(60:\radius) coordinate (60);
      \path (T) ++(180:\radius) coordinate (180);
      \path (T) ++(310:\radius) coordinate (310);
      \path (T) ++(10:\radius) coordinate (10);
      \path (T) ++(130:\radius) coordinate (130);
      \path (T) ++(250:\radius) coordinate (250);
      \path (T) ++(200:\radius) coordinate (200);

      \fill[blue] (0) circle[radius=2pt] ++(0:.75em) node {$X_2$};
      \fill (120) circle[radius=2pt] ++(120:.75em) node {$X_1$};
      \fill (240) circle[radius=2pt] ++(240:.75em) node {$X_3$};

      \fill[black] (60) ++(60:1em) node {$f_1$};
      \fill[black] (180) ++(180:1em) node {$\vdots$};
      \fill[black] (310) ++(310:1em) node {$f_2$};

      \draw[<-] (10) ++(60:.5em) arc[radius=\radius, start angle=10, end angle=110];
      \draw[<-] (200) ++(180:.5em) arc[radius=\radius, start angle=200, end angle=230];
      \draw[<-] (130) ++(180:.5em) arc[radius=\radius, start angle=130, end angle=160];
      \draw[<-] (250) ++(300:.5em) arc[radius=\radius, start angle=250, end angle=350];

      % define a random point (A) on this circle
      \path (B) ++(0:\radius) coordinate (0);
      \path (B) ++(120:\radius) coordinate (120);
      \path (B) ++(240:\radius) coordinate (240);
      \path (B) ++(60:\radius) coordinate (60);
      \path (B) ++(180:\radius) coordinate (180);
      \path (B) ++(310:\radius) coordinate (310);
      \path (B) ++(10:\radius) coordinate (10);
      \path (B) ++(130:\radius) coordinate (130);
      \path (B) ++(250:\radius) coordinate (250);
      \path (B) ++(200:\radius) coordinate (200);
      \path (B) ++(70:\radius) coordinate (70);
      \path (B) ++(90:\radius) coordinate (90);
       \path (B) ++(330:\radius) coordinate (330);

      \fill[blue] (0) circle[radius=2pt] ++(0:.75em) node {$X_2$};
      \fill (120) circle[radius=2pt] ++(120:.75em) node {$X_1$};
      \fill (240) circle[radius=2pt] ++(240:.75em) node {$X_3$};
      \fill[blue] (60) circle[radius=2pt] ++(0:1.75em) node {${\sf im}(f_1)$};
      %\fill[red] (180) circle[radius=2pt] ++(120:.75em) node {$Y$};
      \fill[blue] (310) circle[radius=2pt] ++(240:.5em) node {$\hspace{8ex} {\sf im}(f_2)$};

      \fill[black] (10) ++(60:1.15em) node {$ \hspace{2ex}f_1^+$};
      \fill[black] (90) ++(90:1.25em) node {$f_1^-$};
      \fill[black] (180) ++(180:1em) node {$\vdots$};
      \fill[black] (250) ++(310:1.5 em) node {$f_2^+$};
      \fill[black] (330) ++(0:1.35 em) node {$f_2^-$};

      \draw[<-] (10) ++(60:.5em) arc[radius=\radius, start angle=10, end angle=30];
      \draw[<-] (70) ++(60:.5em) arc[radius=\radius, start angle=70, end angle=110];
      \draw[<-] (200) ++(180:.5em) arc[radius=\radius, start angle=200, end angle=230];
      \draw[<-] (130) ++(180:.5em) arc[radius=\radius, start angle=130, end angle=160];
      \draw[<-] (250) ++(300:.5em) arc[radius=\radius, start angle=250, end angle=290]; 
      \draw[<-] (330) ++(300:.5em) arc[radius=\radius, start angle=330, end angle=350]; 
    \end{tikzpicture}   
    .\]
        \caption{Given that $X_2$ is degree less than $n$, then both ${\sf im}(f)$ and ${\sf im}(g)$ must also have degree less than $n$ as $f^+$ is increasing and $f^-$ is decreasing. Here the blue denotes that the degree of the object is less than $n$. }
        \label{smallfig}
    \end{figure}  
        Therefore by repeating $F$ a maximum of $N$ times, all vertices are carried to objects with degree less then $n$. Altogether we see that the image of $F^N$ bifurcates: either $\ell$ carries all morphisms of $\lambda$ to isomorphisms, in which case $F^{N}(\lambda, \ell) = (\lambda, \ell)$ and $F^{N-1}(\lambda, \ell) \in   \Un \left( (\para^{\sf op})_{= N} \to {\sf Quiv}^{\sf op} \xrightarrow{\hom_\Cat(\_, \cR_{=n})} \spaces \right)  $, or $F^{N}(\lambda, \ell) \in  \Un_{\le n-1, \le N}$. We conclude that 
        \[|\Un_{\le n-1, \le N}| \simeq \left| \Un \left( (\para^{\sf op})_{= N} \to {\sf Quiv}^{\sf op} \xrightarrow{\hom_\Cat(\_, \cR_{=n})} \spaces \right)\right| \amalg \left| \Un_{\le n-1, \le N}\right|~.\] 
        This fact and manipulations of colimits gives  
        \begin{align*}
        |\Un_{\le n}| &\simeq \left|\underset{N \to \infty} \colim \Un_{\le n, \le N}\right| \\
        &\simeq \underset{N \to \infty} \colim \left| \Un \left( (\para^{\sf op})_{= N} \to {\sf Quiv}^{\sf op} \xrightarrow{\hom_\Cat(\_, \sB \cR_{=n})} \spaces \right)\right| \amalg \left|\Un_{\le n-1, \le N} \right| 
         \stepcounter{equation}\tag{\theequation} \label{induction}  \\
        &\simeq \left|\underset{N \to \infty} \colim \left[ \Un \left( (\para^{\sf op})_{= N} \to {\sf Quiv}^{\sf op} \xrightarrow{\hom_\Cat(\_, \sB \cR_{=n})} \spaces \right) \right]\right| \amalg \left|\Un_{\le n-1}\right|
        \stepcounter{equation}\tag{\theequation} \label{Un<N} \\
        &\simeq  \left| \Un \left( \underset{N \to \infty} \colim \left[(\para^{\sf op})_{= N}\right] \to {\sf Quiv}^{\sf op} \xrightarrow{\hom_\Cat(\_, \sB \cR_{=n})} \spaces \right) \right| \amalg |\Un_{\le n-1}| 
        \stepcounter{equation}\tag{\theequation} \label{exponentiable} \\
        &\simeq \left| \Un \left( \para^{\sf op} \to {\sf Quiv}^{\sf op} \xrightarrow{\hom_\Cat(\_, \sB \cR_{=n})} \spaces \right)\right| \amalg \left|\Un_{\le n-1}\right| \\
        :&= |\Un_{=n}| \amalg |\Un_{\le n-1}|
        \stepcounter{equation}\tag{\theequation} \label{fullsplit}  ~.
        \end{align*} 
    Here \cref{induction} is due to the above discussion; \cref{Un<N} is exploiting the commutativity of the geometric realization and disjoint union and applying \cref{colimNUn}; \cref{exponentiable} is again because cartesian fibrations are exponentiable. This equivalence along with our inductive hypothesis gives
    \begin{align*}
        {\sf HH}(\cR_{\le n}) &\simeq \int_{S^1} \cR_{\le n} \\
        &\simeq |\Un_{\le n}| \\
        &\simeq \left|  \Un_{= n} \right| \amalg \left|\Un_{\le n-1}\right|
        \stepcounter{equation}\tag{\theequation} \label{expansion}\\ 
        &\simeq \left| \Un_{=n} \right| \amalg {\sf HH}(\cR_{\le n-1}) 
        \stepcounter{equation}\tag{\theequation} \label{Un=HH}\\
        &\simeq \left| \Un_{=n} \right| \amalg \left[\coprod_{0 \le q\le n-1}\coprod_{[r]\in\cR_{= (q-1)}} \coprod_{[\phi]\in \Aut_{\cR}(r)_{/^{\sf conj}}} \sB \sZ_{\Aut_{\cR}(r)}(\phi) \right]   .
        \stepcounter{equation}\tag{\theequation} \label{inductivestep}
    \end{align*}
    Here \cref{expansion} is due to \cref{fullsplit}; \cref{Un=HH} uses \cref{Un<n}; \cref{inductivestep} is utilizing the inductive hypothesis.
    To finally finish our inductive argument, we need to show $|\Un_{=n}|$ is isomorphic with 
    \[\coprod_{[r]\in\cR_{= n}} \coprod_{[\phi]\in \Aut_{\cR}(r)_{/^{\sf conj}}} \sB \sZ_{\Aut_{\cR}(r)}(\phi)~.\] 
    
    Towards this, we note $\left| \Un_{=n} \right| \simeq \int_{S^1} \sB \cR_{=n} \simeq \int_{S^1}^\alpha \cR_{=n}$ as shown in \cite{ayala2025symmetriescyclicnerve}. Then by non-abelian Poincar\'e Duality $\int_{S^1}^\alpha \cR_{=n} \simeq \map(S^1, \sB \cR_{=n})$. Since $\cR_{=n}$ is a groupoid, the circle is connected, and \cref{MapS1}:
\begin{align*}
    \map(S^1, \sB \cR_{=n}) &\simeq \map(S^1, \coprod_{[r]\in \cR_{=n}} \sB\Aut_{\cR}(r)) \\
    &\simeq \coprod_{[r]\in \cR_{=n}}\map(S^1, \sB\Aut_{\cR}(r)) \\&\overset{(\ref{MapS1})}\simeq \coprod_{[r]\in\cR_{= n}} \coprod_{[\phi]\in \Aut_{\cR}(r)_{/^{\sf conj}}} \sB \sZ_{\Aut_{\cR}(r)}(\phi) ~.
\end{align*}
  All of these facts together give an equivalence: 
    \[
    \left| \Un_{=n} \right| \simeq \coprod_{[r]\in\cR_{= n}} \coprod_{[\phi]\in \Aut_{\cR}(r)_{/^{\sf conj}}} \sB \sZ_{\Aut_{\cR}(r)}(\phi) ~.
    \]
    An application of \cref{inductivestep} leads to the desired result to complete the induction:
    \begin{align*}
        {\sf HH}(\cR_{\le n}) &\simeq  \underset{0 \geq r \geq n }  \coprod \hspace{1ex} \coprod_{[r]\in\cR_{= n}} \coprod_{[\phi]\in \Aut_{\cR}(r)_{/^{\sf conj}}} \sB \sZ_{\Aut_{\cR}(r)}(\phi) ~.
    \end{align*}
\end{proof}

\end{document}